 \let\temp\phi
\let\phi\varphi
\let\varphi\temp
\newcommand{\cl}[1]{\overline{#1}}
\newcommand{\C}{\mathbb{C}}
\newcommand{\N}{\mathbb{N}}
\newcommand{\R}{\mathbb{R}}
\DeclareMathOperator{\Span}{span}
\newcommand{\calC}{\mathcal{C}}
\newcommand{\calF}{\mathcal{F}}
\newcommand{\calM}{\mathcal{M}}
\DeclareMathOperator{\conv}{conv}
\newcommand{\Angle}[1]{\left\langle #1 \right\rangle}
\DeclareMathOperator{\id}{id}
\theoremstyle{plain}
\newtheorem{lemma}{Lemma}
\newtheorem{proposition}[lemma]{Proposition}
\newtheorem{corollary}[lemma]{Corollary}
\theoremstyle{definition}
\newtheorem{example}[lemma]{Example}
\newtheorem{remark}[lemma]{Remark}
\newtheorem{definition}[lemma]{Definition}
\subjclass[2020]{46L07,	47L07, 46A55}
\newtheorem{lettertheorem}{Theorem}
\newtheorem{lettercorollary}[lettertheorem]{Corollary}
\title[On the CPAP and the Boundary Condition for the Zero Map]{On the Completely Positive Approximation Property for Non-Unital Operator Systems and the Boundary Condition for the Zero Map}
\author{Se-Jin Kim}
\thanks{S.~Kim is supported by the Research Foundation Flanders (FWO) research project G085020N and the internal KU Leuven funds project number C14/19/088.}
\address{KU Leuven, Department of Mathematics, Leuven, Belgium}
\email{sam.kim@kuleuven.be}
\begin{document}
\maketitle 
 
\begin{abstract}
	The purpose of this paper is two-fold: firstly, we give a characterization on the level of non-unital operator systems for when the zero map is a boundary representation. As a consequence, we show that a non-unital operator system arising from the direct limit of C*-algebras under positive maps is a C*-algebra if and only if its unitization is a C*-algebra. Secondly, we show that the completely positive approximation property and the completely contractive approximation property of a non-unital operator system is equivalent to its bidual being an injective von Neumann algebra. This implies in particular that all non-unital operator systems with the completely contractive approximation property must necessarily admit an abundance of positive elements.
\end{abstract}

\section{Introduction}
	Recently the present author, in joint work with Matthew Kennedy and Nicholas Manor, initiated the study of non-unital operator systems, henceforth referred to as just \emph{operator systems}, by way of the nc convex duality of Davidson--Kennedy \cite{DavidsonKennedy2019, KKM2023}. This provides a new framework for the norm closed self-adjoint subspaces of bounded linear operators on a Hilbert space as abstractly characterized by Werner \cite{Werner} and further explored in \cite{connes2021spectral,connes2022tolerance, HumeniukKennedyManor, Ng2022}. In the nc convex algebraic geometry of \cite{KKM2023}, associated to an operator system $E$ is a geometric object $(QS(E),\delta_0)$, where $QS(E)$ is the nc convex space in the notation of \cite{HumeniukKennedyManor} corresponding to the class of completely positive and completely contractive representations of $E$ and where $\delta_0$ is a distinguished point in $QS(E)$ corresponding to the zero map $\delta_0: E \to \{0\}$. In the case of C*-algebras, extremal points correspond to irreducible representations. In particular, in \cite[Lemma 10.8]{KKM2023} it is demonstrated that a key property that the nc convex sets associated to C*-algebras satisfy that general operator systems need not necessarily satisfy is that the distingushed point $\delta_0$ is an extremal point. 
    
    On the dual side, one distinguishing feature of C*-algebras is that they always admit an abundance of positive elements, as explored in \cite[Section 8]{HumeniukKennedyManor}. In contrast, it is well known that there are operator systems where there are \emph{no} positive elements such as the set of trace zero matrices in $M_n$. The first main result shows that these two distinctions are equivalent. That is, 
\begin{lettertheorem}\label{Theorem: main}
	Let $E$ be an operator system. The following are equivalent:
	\begin{enumerate}[(1)]
		\item $E$ is approximately generated by positives, that is, $\cl{\Span}(E_+) = E$.
		\item The zero map $\delta_0:E \to \{0\}$ is an extremal point.  
	\end{enumerate}
\end{lettertheorem}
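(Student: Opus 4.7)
The approach centers on the following concrete reformulation of extremality of $\delta_0$, which I expect follows from the scalar-level nc convex decompositions in \cite{KKM2023}: $\delta_0$ is extremal in $QS(E)$ if and only if for every ccp map $\phi:E\to B(H)$ and every vector $\xi\in H$, the vanishing of the state $e\mapsto\langle\phi(e)\xi,\xi\rangle$ on all of $E$ forces $\phi(e)\xi = 0$ for every $e \in E$. This is the non-unital analogue of the familiar support condition for pure states on a C*-algebra and recovers, in the C*-algebra case, the argument of \cite[Lemma 10.8]{KKM2023}.

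Granted this characterization, $(1)\Rightarrow(2)$ reduces to a Cauchy--Schwarz step: for $e \in E_+$ we have $\|\phi(e)^{1/2}\xi\|^{2} = \langle\phi(e)\xi,\xi\rangle = 0$, whence $\phi(e)\xi = 0$; linearity of $\phi$ followed by norm continuity propagates the identity across $\Span(E_+)$ and its closure $E$.

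For $(2)\Rightarrow(1)$ I argue contrapositively. Assuming $F := \cl{\Span}(E_+) \neq E$, Hahn--Banach furnishes a bounded self-adjoint functional $\lambda \in E^{*}$ with $\lambda|_F = 0$ and $\lambda(y_0) \neq 0$ for some $y_0 \in E$. The candidate witness is the $M_2$-valued map
\[
\phi(e) = \begin{pmatrix} 0 & \lambda(e) \\ \overline{\lambda(e)} & 0 \end{pmatrix},\qquad \xi = e_1 \in \C^{2},
\]
which by construction satisfies $\langle\phi(e)\xi,\xi\rangle \equiv 0$ on $E$ and $\phi(y_0)\xi \neq 0$. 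The only nontrivial step is verifying complete positivity of $\phi$, which reduces to showing that $\lambda$ vanishes on every entry of every positive matrix $[e_{ij}] \in M_n(E)_+$, a condition \emph{a priori} stronger than $\lambda|_{E_+}=0$. The key idea is a polarization argument: for each $v \in \C^n$ the compression $V^{*}[e_{ij}]V = \sum_{k,l} \bar v_k v_l\, e_{kl}$, where $V\eta := v\otimes\eta$, lies in $E_+$ (it is positive as an operator on $H$ and is manifestly in $E$). Specializing $v \in \{e_i,\, e_j,\, e_i\pm e_j,\, e_i\pm i e_j\}$ and subtracting the resulting positives realizes each entry $e_{ij}$ as an $\R$-linear combination of elements of $E_+$, so every entry of $[e_{ij}]$ lies in $\Span(E_+)\subseteq F = \ker\lambda$. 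Hence $\phi$ is ccp after rescaling $\lambda$, and $(\phi,\xi)$ witnesses the failure of the support condition, completing the contrapositive.
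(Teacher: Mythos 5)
Your proposal is correct and follows essentially the same route as the paper: your ``support condition'' is exactly maximality of $\delta_0$ in the dilation order (which the paper's preliminary Proposition identifies with extremality), the forward implication is the same positivity-of-the-$(1,1)$-corner argument, and the converse builds the identical off-diagonal $M_2$-valued dilation from a functional annihilating the entries of positive matrices, using the same polarization identity the paper isolates as a separate proposition. The only cosmetic differences are that you argue via $\|\phi(e)^{1/2}\xi\|^2=0$ rather than the $2\times 2$ positive-matrix lemma, and you check complete positivity by showing the dilation kills positive matrices entrywise rather than by computing the quadratic form.
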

Condition $(1)$ is satisfied by many examples such as all unital and approximately unital operator systems in the sense of \cite{Ng1969} and \cite{Huang2011}, all C*-algebras, and, by the Hahn--Jordan decomposition Theorem, any operator system with a predual \cite{Ng2022}. In contrast, by \cite[Lemma 8.5]{HumeniukKennedyManor}, condition $(1)$ for finite dimensional operator systems is only satisfied if the operator system is already unital. 

Recently, \cite{courtney2023completely, courtney2023nuclearity, GoldbringSinclair} explore how one can identify a C*-algebra using the language of operator systems. The papers \cite{courtney2023completely, courtney2023nuclearity} are in particular interested in the case when an operator system is the direct limit of a sequence of completely positive contractions on C*-algebras. As a consequence of Theorem~\ref{Theorem: main}, we establish the following:
\begin{lettercorollary}\label{Corollary: limit of C*-algebras}
	Let $E$ be an operator system. Suppose that there is a collection of C*-algebras $\calM_i$ and positive maps $\rho_i: \calM_i \to E$ such that $E = \cl{\bigcup_i \rho_i(\calM_i)}$. The following are equivalent: 
	\begin{enumerate}
		\item $E$ is isomorphic to a C*-algebra. 
		\item The canonical unitization $E^\sharp$ is isomorphic to a C*-algebra.
	\end{enumerate}
\end{lettercorollary}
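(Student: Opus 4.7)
The forward implication $(1) \Rightarrow (2)$ is immediate: when $E$ is a C*-algebra, the operator system unitization $E^\sharp$ coincides with the usual C*-algebraic unitization and is again a C*-algebra.

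For the converse, the strategy is to apply Theorem~\ref{Theorem: main}. First I would use the hypothesis to show that $E$ is approximately generated by its positive cone. Each C*-algebra $\calM_i$ satisfies $\calM_i = \cl{\Span}((\calM_i)_+)$, so positivity of $\rho_i$ gives $\rho_i(\calM_i) \subseteq \Span(E_+)$; taking the closure of the union $\bigcup_i \rho_i(\calM_i) = E$ yields $E \subseteq \cl{\Span}(E_+)$, the reverse inclusion being automatic. Theorem~\ref{Theorem: main} then provides that $\delta_0 \in QS(E)$ is an extremal point.

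The remaining step is to combine the extremality of $\delta_0$ with the hypothesis that $E^\sharp$ is a C*-algebra to conclude that $E$ itself is a C*-algebra. Under the nc convex duality of \cite{KKM2023}, the distinguished point $\delta_0$ corresponds to the canonical state $\phi_0 : E^\sharp \to \C$ determined by $\phi_0(e + \lambda 1) = \lambda$. I would argue that extremality of $\delta_0$ in the nc quasi-state space, together with $E^\sharp$ being a C*-algebra, forces $\phi_0$ to be a character on $E^\sharp$. Once this is in place, $E = \ker \phi_0$ is a closed two-sided ideal of the C*-algebra $E^\sharp$, and therefore is itself a C*-algebra.

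The main obstacle is this final step: upgrading nc extremality of $\delta_0$ to multiplicativity of $\phi_0$ on $E^\sharp$. At the classical scalar level this cannot work, since pure states on a C*-algebra need not be multiplicative (vector states on $M_n$ with $n \geq 2$ already fail), so one must genuinely exploit the matrix-level nc convex structure on $QS(E)$ rather than just the scalar quasi-state space, presumably via the extremality criteria developed in \cite{DavidsonKennedy2019, KKM2023}.
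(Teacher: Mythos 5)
Your route is essentially the paper's: positivity of each $\rho_i$ together with the polarization decomposition $x = \sum_{k=0}^3 i^k x_k$ in $\calM_i$ gives approximate positive generation (this is exactly Proposition~\ref{Proposition: cpc limits}), Theorem~\ref{Theorem: main} then makes $\delta_0$ extremal, and the conclusion is reached just as you describe, by showing $\delta_0^\sharp$ is a character and realizing $E$ as its kernel ideal in the C*-algebra $E^\sharp$. The ``main obstacle'' you flag is not actually an obstacle: extremality in this paper's sense is not mere purity of the scalar state but purity \emph{plus} maximality in the dilation order, and a ucp map on a C*-algebra that is maximal in dilation order is automatically a *-homomorphism (\cite[Proposition 2.2]{arveson2003notes}); the paper packages this step, together with the kernel-ideal argument, as the equivalence of (1) and (4) in its intermediate corollary via \cite[Lemma 10.8]{KKM2023}. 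So your plan closes once you invoke that standard dilation-theoretic fact, which is precisely what the paper does.
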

This result therefore allows one to characterize nuclear C*-algebras in the category of non-unital operator systems by looking at its unitization. Note by Example~\ref{Example: no positives}, Corollary~\ref{Corollary: limit of C*-algebras} is not true if we drop the assumptions on $E$. 

The final section of this paper is on a characterization of the completely positive approximation property (henceforth denoted CPAP) and the completely contractive approximation property (CCAP) for operator systems. These comprise of those operator systems for which the identity map can be approximately factorized by finite dimensional C*-algebras. See Definition~\ref{Definition: cpap} for a more precise formulation. In \cite[Theorem 3.5]{HanPaulsen}, it is shown that for unital operator systems, the CCAP and CPAP are equivalent to injectivity of the bidual. On the other hand, due to the lack of local reflexivity, \cite{kirchberg1995,EOR} demonstrate that for general operator spaces, injectivity of the bidual is not enough to get the CCAP. The second main result of this paper demonstrates that operator systems behave closer to their unital counterpart in this regard. This is therefore the appropriate generalization of nuclearity of C*-algebras and nuclearity of unital operator systems (see \cite{BrownOzawa} and the references therein for results and applications of nuclear C*-algebras), generalizing \cite[Theorem 3.5]{HanPaulsen} and refining \cite[Theorem 6.3]{HumeniukKennedyManor}. 
\begin{lettertheorem}\label{Theorem: cpap}
	Let $E$ be an operator system. The following are equivalent: 
	\begin{enumerate}
		\item $E$ has the CPAP. 
        \item $E$ has the CCAP. 
        \item $E^{**}$ is isomorphic as an operator system to an injective von Neumann algebra.
		\item $E^\sharp$ is a nuclear operator system and $E$ is approximately generated by positives.
	\end{enumerate}
\end{lettertheorem}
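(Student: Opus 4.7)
The plan is: $(1) \Rightarrow (2)$ is trivial; $(1) \Leftrightarrow (4)$ follows from unitization together with Han--Paulsen's unital theorem; $(4) \Leftrightarrow (3)$ is proved by comparing the biduals of $E$ and $E^\sharp$; and $(2) \Rightarrow (1)$ then follows from the established equivalences.

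For $(1) \Rightarrow (4)$: if $(\phi_i, \psi_i, A_i)$ witnesses the CPAP of $E$, positivity of each $\psi_i$ forces $\psi_i(A_i) \subseteq \cl{\Span}(E_+)$ (since $A_i = \Span A_i^+$), so point-norm density of $\bigcup_i \psi_i\phi_i(E)$ in $E$ gives $\cl{\Span}(E_+) = E$; and unitizing $\phi_i, \psi_i$ to UCP maps between $E^\sharp$ and $A_i^\sharp$ produces the UCPAP of $E^\sharp$, equivalent to nuclearity (Han--Paulsen). For $(4) \Leftrightarrow (3)$, use the identification $(E^\sharp)^{**} \cong (E^{**})^\sharp \cong E^{**} \oplus \mathbb{C}$ together with the normal extension of the character $\pi: E^\sharp \to \mathbb{C}$: when $E^\sharp$ is nuclear, $(E^\sharp)^{**}$ is an injective von Neumann algebra (Han--Paulsen), so $E^{**} = \ker \pi^{**} = p(E^\sharp)^{**}$ for a central projection $p$, and is injective von Neumann as a corner. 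Conversely, $E^{**}$ injective von Neumann makes $(E^{**})^\sharp$ injective von Neumann, so $E^\sharp$ is nuclear, and approximate generation of $E$ by positives follows from Hahn--Jordan inside $E^{**}$ together with a density argument for the canonical inclusion $E \hookrightarrow E^{**}$.

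The main work is $(4) \Rightarrow (1)$. Starting from UCP maps $\phi_i: E^\sharp \to M_{n_i}$ and $\psi_i: M_{n_i} \to E^\sharp$ with $\psi_i\phi_i \to \id_{E^\sharp}$ point-norm, the task is to construct CP approximations of $\id_E$ through matrix algebras. The idea is to absorb the scalar component $\omega_i := \pi\circ\psi_i$ of $\psi_i$ into positives of $E$. Using approximate generation by positives, extract $u_\alpha \in E_+$ with $\|u_\alpha\| \leq 1$ and $u_\alpha \to p$ in the weak-* topology (where $p$ is the unit of $E^{**}$, central in $(E^\sharp)^{**}$). The CP contraction $\tilde\psi_i^\alpha(a) := u_\alpha^{1/2}\psi_i(a)u_\alpha^{1/2}$, computed inside the von Neumann algebra $(E^\sharp)^{**}$, lands in $E^{**}$ and approximates $\id_E$ in a suitable sense; a principle-of-local-reflexivity argument for operator systems then replaces these by CP contractions into $E$, and a Mazur-type convexity argument upgrades point-weak-* convergence to point-norm convergence on finite subsets, yielding the CPAP of $E$. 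Finally, $(2) \Rightarrow (1)$ follows because CCAP of $E$ unitizes to CCAP of $E^\sharp$, which in the unital setting coincides with UCPAP (Han--Paulsen), giving nuclearity of $E^\sharp$; the positivity half of (4) comes from the already-established $(4) \Leftrightarrow (3)$.

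The main obstacle is the local-reflexivity step in $(4) \Rightarrow (1)$: converting CP maps $M_{n_i} \to E^{**}$ into CP maps $M_{n_i} \to E$ while preserving approximation. In the unital case (Han--Paulsen), the order unit makes this replacement routine; for non-unital $E$, approximate generation by positives — equivalent by Theorem~\ref{Theorem: main} to extremality of $\delta_0$ — provides the positive quasi-unit $u_\alpha \in E_+$ that stands in for the missing unit, keeping the modified maps CP and controlling the deviation from the identity.
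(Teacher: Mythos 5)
Most of your outline tracks the paper's actual proof: the hard direction $(4)\Rightarrow(1)$ is handled in the paper exactly as you describe, by extracting a quasi-unit net $e_\lambda\in E_+$ with $\|e_\lambda\|<1$ and $e_\lambda\to 1$ weak* in $E^{**}$ (via Huang/Ng, which is where approximate positive generation enters), compressing the semidiscreteness maps $\psi\colon M_n\to E^{**}$ by $\sqrt{e_\lambda}$, pushing the resulting cpcc maps down to $E$ using the point-weak* density of $CP(M_n,E)$ in $CP(M_n,E^{**})$ (Choi's theorem plus weak* density of $M_n(E)_+$ in $M_n(E^{**})_+$ --- no local reflexivity is actually needed for maps out of $M_n$), a convexity argument to keep the maps contractive, and finally the Brown--Ozawa point-weak-to-point-norm trick. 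Your $(1)\Rightarrow(4)$ and $(3)\Leftrightarrow(4)$ sketches are also sound and close to the paper's.

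The genuine gap is your closing step $(2)\Rightarrow(1)$. You assert that ``CCAP of $E$ unitizes to CCAP of $E^\sharp$,'' but the CCAP witnesses are merely completely contractive, not completely positive, and such maps do \emph{not} admit a canonical unitization: the functorial unitization $\phi\mapsto\phi^\sharp$ is only available for cpcc maps. For a concrete failure, take $E=c_0(\N)$ and $\phi(f)=\tfrac12(f(1)-f(2))$, which is completely contractive, while the natural unital extension to $c_0(\N)^\sharp=c$ corresponds to the measure $\tfrac12\delta_1-\tfrac12\delta_2+\delta_\infty$ of total variation $2$. So the implication out of $(2)$ cannot be obtained by unitizing, and your cycle never closes: you prove $(1)\Leftrightarrow(3)\Leftrightarrow(4)$ and $(1)\Rightarrow(2)$, but nothing returns from $(2)$. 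The paper closes this loop differently, via the Effros--Ozawa--Ruan operator-space theorem (CCAP $\iff$ $E^{**}$ injective and $E$ locally reflexive) combined with Lemma~\ref{Lemma: Local Reflexivity}, which shows that for a C*-system local reflexivity is inherited from $E^\sharp$ and hence is automatic when $E^{**}$ is injective. Some version of this operator-space input (or another argument extracting injectivity of $E^{**}$ directly from a weak* cluster point of the maps $\psi_i^{**}\circ\phi_i^{**}$) is needed to get from $(2)$ back into the equivalence class.
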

In particular, unlike the case of general operator spaces \cite[Theorem 4.5]{EOR}, for operator spaces that are closed under involution, local reflexivity of $E$ when its bidual is injective is automatic. Finally, we give an example of a non-unital operator system that is not a C*-algebra which exhibits the CPAP. 

\subsection*{Acknowledgements}
	The main ideas for this paper arose during the conference \emph{Noncommutativity in the North - MikaelFest}. I would like to thank the organizers Georg Huppertz, Hannes Thiel, and Eduard Vilalta for fostering an excellent environment for new ideas as well as the financial support received to attend the conference. I would also like to thank Kristin Courtney for suggesting that I write this paper. Finally, I am supported by the Research Foundation Flanders (FWO) research project G085020N and the internal KU Leuven funds project number C14/19/088.

\section{Characterization of when $\delta_0$ is an extremal point}

In this paper, an \emph{operator system} will always mean norm-closed subspace $E \subseteq B(H)$ that is closed under involution: $x^* \in E$ if $x \in E$. If $E$ additionally contains the unit $1 \in B(H)$, then $E$ is said to be \emph{unital}. This differs from the standard terminology where operator systems are always assumed to be unital. For every $d \geq 1$, the set of positive elements in $M_d(E)$ is denoted $M_d(E)_+$. As in \cite[Definition 2.11]{connes2021spectral}, the canonical unitization of an operator system is denoted $E^\sharp$. Note that when $E$ is a C*-algebra, $E^\sharp$ agrees with the usual unitization of a C*-algebra. The following terminology comes from \cite[Section 8]{HumeniukKennedyManor}.
\begin{definition}
For an operator system $E$, we say that $E$ is \emph{approximately generated by positives} if the set $E_+$ spans a dense subset of $E$.
\end{definition}

In \cite[Example 8.1]{HumeniukKennedyManor}, it is shown that there exist examples of operator systems $E$ which are approximately generated by positives, but not generated by positives. A polarization trick shows that being approximately generated by positives is equivalent to being densely spanned by the entries of the positive matrices in $M_d(E)$ for all $d$.
\begin{proposition}
	Let $E$ be an operator system. $E$ is approximately generated by positives if and only if the set 
	\begin{align*}
		E_0 := \Span\{\Angle{X \xi, \eta}: d \geq 1, \xi, \eta \in \C^d, X \in M_d(E)_+\}
	\end{align*}	
	is dense in $E$. 
\end{proposition}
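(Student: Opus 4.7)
The plan is to establish the pair of inclusions
$$\Span(E_+) \subseteq E_0 \subseteq \Span(E_+),$$
so that $E_0$ and $\Span(E_+)$ have the same closure in $E$ and the equivalence follows. The forward containment is immediate from the definition of $E_0$: taking $d = 1$ and $\xi = \eta = 1$, every $x \in E_+$ appears as $\langle x\xi, \eta\rangle$, so $E_+ \subseteq E_0$.

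For the reverse containment, I would first reduce to matrix entries. A generator of $E_0$ expands in coordinates as $\langle X\xi, \eta\rangle = \sum_{i,j} \overline{\eta_i}\, \xi_j\, X_{ij}$, so it suffices to show that every entry $X_{ij}$ of an arbitrary $X \in M_d(E)_+$ lies in $\Span(E_+)$. Compression by a standard basis vector $e_i \in \C^d$ immediately gives $X_{ii} \in E_+$, handling the diagonal entries.

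For an off-diagonal entry with $i \neq j$, I would deploy a polarization trick. Compressing $X$ to the $2 \times 2$ principal submatrix indexed by $\{i, j\}$ yields the positive element $Y = \begin{pmatrix} X_{ii} & X_{ij} \\ X_{ji} & X_{jj} \end{pmatrix} \in M_2(E)_+$. For $v_\omega = (1, \omega)^T$ with $\omega \in \C$, the scalar compression $\langle Y v_\omega, v_\omega\rangle = X_{ii} + \omega X_{ji} + \overline{\omega}\, X_{ij} + |\omega|^2 X_{jj}$ is again in $E_+$. Subtracting the already-known positive diagonal contributions reduces the task to showing that $\omega X_{ji} + \overline{\omega}\, X_{ij} \in \Span(E_+)$ for each $\omega$; plugging in $\omega = 1$ and $\omega = i$ and solving the resulting $2 \times 2$ linear system writes $X_{ij}$ as an explicit complex linear combination of four positive elements.

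I do not foresee a serious obstacle here. The polarization step is the only substantive move, but it is a textbook technique; the remainder is bookkeeping, namely that compression preserves positivity in $M_d(E)$ and that entries of matrices over $E$ lie in $E$.
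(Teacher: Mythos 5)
Your argument is correct and rests on the same key move as the paper's proof, namely polarization applied to compressions of a positive matrix; the paper simply applies the polarization identity directly to $\Angle{X\xi,\eta}$ via the four vectors $\xi+i^k\eta$, $k=0,\dots,3$, rather than first reducing to individual matrix entries and polarizing $2\times 2$ blocks. The extra bookkeeping in your version is harmless, and both proofs in fact establish the identity $E_0=\Span(E_+)$.
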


\begin{proof}
Since $E_+$	is contained in $E_0$, one direction follows. Conversely, suppose that $E_0$ spans a dense subset of $E$. Let $X \in M_d(E)_+$ and fix $\xi, \eta \in \C^d$. The polarization identity gives us: 
\begin{align*}
	\Angle{X \xi,\eta} = \frac{1}{4}\sum_{k=0}^3 i^k\Angle{X (\xi + i^k \eta), (\xi + i^k \eta)} \;. 
\end{align*}
As $\Angle{X \zeta,\zeta} \in E_+$ for all $\zeta \in \C^d$, the  right hand side of the equation belongs to $\Span(E_+)$.
\end{proof}

Given an operator system $E$, the set of completely positive and completely contractive (henceforth denoted cpcc) maps $\phi: E \to B(H)$ admits an ordering called the dilation order $\preceq$. Given $\phi:E \to B(H)$ and $\psi: E \to B(K)$, we say that $\phi \preceq \psi$ if there exists an isometry $V : H \hookrightarrow K$ such that $\phi = V^* \psi V$. We say that $\phi$ is \emph{maximal in dilation order} if all dilations $\psi \succeq \phi$ are trivial: $\psi = \phi \oplus \zeta$ for some cpcc $\zeta$. See \cite{arveson2003notes} and \cite[Section 5]{DavidsonKennedy2019} for more details. 

By an extremal point of an operator system $E$, we mean an extremal point in the sense of \cite[Definition 6.1.1]{DavidsonKennedy2019} on the space $QS(E)$ described in the introduction. By \cite[Corollary 6.2.1]{DavidsonKennedy2019} and \cite[Proposition 4.4]{KKM2023}, this is equivalent to the notion of a boundary representation due to Arveson \cite{arveson2008noncommutative}. For the purposes of this paper, we appeal to \cite[Proposition 6.1.4]{DavidsonKennedy2019} to give the following alternative but equivalent definition: 
\begin{definition}
    Let $E$ be an operator system. Let $n \geq 1$ be a possibly infinite cardinal and let $\phi: E \to B(\ell^2(n))$ be a cpcc map. The map $\phi$ is said to be an \emph{extremal point} if $\phi$ is extremal in the convex set $QS_n(E)$ of cpcc maps $E \to B(\ell^2(n))$ and $\phi$ is maximal in dilation order. 
\end{definition}

The following result will be implicitly used throughout the paper. 
\begin{proposition}
	Let $E$ be an operator system. The following are equivalent: 
    \begin{enumerate}
        \item The zero map $\delta_0$ is an extremal point. 
        \item The map $\delta_0^\sharp$ is an extremal point. 
        \item $\delta_0$ is maximal in dilation order. 
        \item $\delta_0^\sharp$ is maximal in dilation order. 
        \end{enumerate}
\end{proposition}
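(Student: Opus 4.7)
My plan is to split the proof into three reductions: $(1) \Leftrightarrow (3)$, $(2) \Leftrightarrow (4)$, and $(3) \Leftrightarrow (4)$. The implications $(1) \Rightarrow (3)$ and $(2) \Rightarrow (4)$ are immediate from the definition of extremal point stated above, so for each pair it suffices to verify that the convex-combination half of extremality is automatic for the zero map.

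For the reverse of $(1) \Leftrightarrow (3)$, I will suppose $\delta_0 = \tfrac{1}{2}(\phi + \psi)$ with $\phi, \psi$ cpcc maps on $E$ into the appropriate codomain. For any $e \in E$, both $\phi(e^*e)$ and $\psi(e^*e)$ are positive operators summing to zero, so each vanishes. Passing to the canonical unitization $\phi^\sharp: E^\sharp \to B(H)$, which is ucp because $\phi$ is cpcc, the Kadison--Schwarz inequality yields $\phi^\sharp(e)^*\phi^\sharp(e) \leq \phi^\sharp(e^*e) = 0$, so $\phi(e) = 0$, and the same applies to $\psi$. The reverse of $(2) \Leftrightarrow (4)$ follows the identical pattern: if $\delta_0^\sharp = \tfrac{1}{2}(\phi + \psi)$ with $\phi, \psi \in QS_1(E^\sharp)$, evaluation at the unit forces $\phi(1) = \psi(1) = 1$, so both are states, and the Kadison--Schwarz argument on $E$ then forces $\phi|_E = \psi|_E = 0$, hence $\phi = \psi = \delta_0^\sharp$.

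The crucial equivalence $(3) \Leftrightarrow (4)$ I will establish via the universal property of the unitization: there is a canonical bijection between cpcc maps $\phi: E \to B(H)$ and ucp maps $\phi^\sharp: E^\sharp \to B(H)$, where $\phi^\sharp$ is the unique ucp extension of $\phi$. I then check that this bijection is compatible with the dilation order. Given an isometry $V: H \to K$ and cpcc $\psi: E \to B(K)$, both $V^*\psi^\sharp V$ and $(V^*\psi V)^\sharp$ are ucp on $E^\sharp$, agree on $1$ (where both equal $I_H$) and on $E$, and hence coincide. A parallel argument shows $\psi = \delta_0 \oplus \zeta$ on $E$ if and only if $\psi^\sharp = \delta_0^\sharp \oplus \zeta^\sharp$ on $E^\sharp$. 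Combining these, dilations of $\delta_0$ correspond bijectively to dilations of $\delta_0^\sharp$, with trivial dilations corresponding to trivial dilations, so the two maximality conditions are equivalent.

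The main obstacle I anticipate is tightening the dilation correspondence in $(3) \Leftrightarrow (4)$, specifically verifying that a trivial dilation of $\delta_0^\sharp$ on $E^\sharp$ genuinely comes from a trivial dilation of $\delta_0$ on $E$, rather than from a decomposition that only respects the adjoined unit. Everything else is formal once the universal property of Werner's unitization for cpcc maps is invoked.
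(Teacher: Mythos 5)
Your reduction $(3)\Leftrightarrow(4)$ via the correspondence between cpcc maps on $E$ and ucp maps on $E^\sharp$ is sound and is essentially what the paper imports from [KKM2023, Proposition 4.4] (the paper instead links $(1)$ and $(2)$ by that citation, but the content is the same). The problem is in the purity step for $(3)\Rightarrow(1)$ and $(4)\Rightarrow(2)$. An operator system in this paper is only a norm-closed self-adjoint subspace of $B(H)$, not an algebra, so for $e \in E$ the product $e^*e$ need not belong to $E$, nor to $E^\sharp = E + \C 1$. The expressions $\phi(e^*e)$ and $\phi^\sharp(e^*e)$ are therefore undefined, and the Kadison--Schwarz inequality cannot be invoked: it requires the domain to contain $e^*e$. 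This is not a cosmetic issue, because the statement you are trying to prove --- that $\delta_0$ is \emph{unconditionally} an extreme point of the convex set $QS_1(E)$ --- is false. Take $E$ to be the trace-zero matrices in $M_n$: one checks that $M_d(E)_+ = \{0\}$ for every $d$, so every self-adjoint contractive functional $f$ on $E$ is a quasistate, as is $-f$, and $\delta_0 = \tfrac{1}{2}\bigl(f + (-f)\bigr)$ is a proper convex combination whenever $f \neq 0$. The same construction, pushed to $E^\sharp$, shows $\delta_0^\sharp$ need not be a pure state either.

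The repair is to use the maximality hypothesis rather than trying to get purity for free. If $\delta_0$ is maximal in dilation order, then $E$ must be approximately generated by positives --- this is exactly the contrapositive direction in the paper's proof of Theorem~\ref{Theorem: main}, where a functional annihilating $E_0$ is used to build a nontrivial off-diagonal dilation of $\delta_0$. Once $\cl{\Span}(E_+) = E$, purity follows easily: if $\delta_0 = \tfrac{1}{2}(\phi + \psi)$ with $\phi, \psi$ quasistates, then $\phi = -\psi$ is both positive and negative on $E_+$, hence vanishes on $E_+$ and therefore on all of $E$. (The paper's own one-line assertion that $\delta_0$ and $\delta_0^\sharp$ ``are pure states'' is itself terse and should be read as conditional on maximality in this way.) With that substitution, and keeping your $(3)\Leftrightarrow(4)$ argument, the proof goes through.
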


\begin{proof}
	Since both $\delta_0$ and $\delta_0^\sharp$ are pure states $(1) \iff (3)$ and $(2) \iff (4)$ hold. That $(1)$ and $(2)$ are equivalent follows from \cite[Proposition 4.4]{KKM2023}. 
\end{proof}

\begin{proof}[Proof of Theorem \ref{Theorem: main}]
	Firstly, assume that $E$ is approximately generated by positives. Let $V: \C \hookrightarrow H$ be an isometry and let $\theta: E \to B(H)$ be a cpcc map for which $V^* \theta V = \delta_0$. Let $W: \{V(1)\}^\perp \hookrightarrow H$ be the inclusion map. Our goal is to show that $W^* \theta(x) V = 0$ and $V^* \theta(x)W = 0$ for all $x \geq 0$. Since 
	\begin{align*}
			0 \leq \theta(x) = \left[\begin{array}{cc} 0 & V^* \theta(x) W \\ W^* \theta(x) V & W^* \theta(x) W \end{array} \right]\;,
	\end{align*}
	by \cite[Lemma 3.1]{PaulsenBook}, it follows that $W^* \theta(x) V = 0$ and $V^* \theta(x) W = 0$.  
	
	By contraposition, assume that $E$ is not approximately generated by positives. Let $E_0$ be the operator subspace generated by the terms of the form $\Angle{X\xi, \eta}$ for $X \in M_d(E)_+$ and $\xi,\eta \in \C^d$. Fix a completely contractive map $f: E \to \C$ such that $f_{E_0} = 0$ but for which for some $y \in E$, $f(y) \neq 0$. Such a map exists by composing any non-trivial completely contractive map on $E/E_0$ with the quotient map $E \to E/E_0$. Define 
	\begin{align*}
		\theta: E \to M_2: x \mapsto \left[\begin{array}{cc} 0 & f(x) \\ f(x^*)^* & 0 \end{array}\right]\;.
	\end{align*}
	Our goal is to show that $\theta$ is a cpcc map. Firstly, observe that as $f$ is completely contractive, that $\theta$ is a completely contractive *-preserving map as well. For complete positivity, let $X \in M_d(E)_+$ and let $\xi, \eta \in \C^d$. A calculation shows that 
\begin{align*}
	\Angle{\theta^{(d)}(X)\xi \oplus \eta, \xi \oplus \eta} = f(\Angle{X\eta,\xi}) + f(\Angle{X\xi, \eta}) = 0 \;. 
\end{align*}
Thus, $\theta$ is a non-trivial cpcc dilation of $\delta_0$, showing $\delta_0$ is not maximal. 
\end{proof}

As a corollary, we have a refinement of \cite[Lemma 10.8]{KKM2023}.
\begin{corollary}
	Let $E$ be an operator system. The following are equivalent:
	\begin{enumerate}[(1)]
		\item $E$ is isomorphic to a C*-algebra. 
		\item $E^\sharp$ is isomorphic to a C*-algebra and $E$ is generated by positives, that is, $\Span(E_+) = E$. 
		\item $E^\sharp$ is isomorphic to a C*-algebra and $E$ is approximately generated by positives. 
		\item $E^\sharp$ is isomorphic to a C*-algebra and the zero map is a *-homomorphism. 
	\end{enumerate}
\end{corollary}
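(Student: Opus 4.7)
The plan is to prove the cycle of implications $(1) \Rightarrow (2) \Rightarrow (3) \Rightarrow (4) \Rightarrow (1)$.

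For $(1) \Rightarrow (2)$, I would observe that when $E$ is a C*-algebra, its operator system unitization $E^\sharp$ coincides with the standard C*-algebra unitization (as recalled in the preliminaries), so $E^\sharp$ is a C*-algebra; and every element of a C*-algebra is a linear combination of four positives via the Cartesian and Jordan decompositions, giving $\Span(E_+) = E$. The implication $(2) \Rightarrow (3)$ is immediate.

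The substantive step is $(3) \Rightarrow (4)$, which is where Theorem~\ref{Theorem: main} does the work. Applying it to the assumption that $E$ is approximately generated by positives shows that $\delta_0$ is an extremal point, and by the Proposition preceding the proof of that theorem, $\delta_0^\sharp$ is then maximal in dilation order on $E^\sharp$. Since $\delta_0^\sharp : E^\sharp \to \C$ is a ucp map on the unital C*-algebra $E^\sharp$, the standard consequence of Stinespring's theorem (see e.g.\ \cite{arveson2003notes,arveson2008noncommutative} and the nc convex reformulation \cite[Proposition 6.1.4]{DavidsonKennedy2019}) that a ucp map on a unital C*-algebra is maximal in dilation order precisely when it is a $*$-homomorphism applies. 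Hence $\delta_0^\sharp$ is a character on $E^\sharp$, and by restriction $\delta_0$ is a $*$-homomorphism.

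Finally, for $(4) \Rightarrow (1)$, if $\delta_0^\sharp$ is a character on the C*-algebra $E^\sharp$, then its kernel $E$ is a closed, self-adjoint, two-sided ideal of $E^\sharp$, and therefore a C*-algebra in its own right.

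I expect the main obstacle to be $(3) \Rightarrow (4)$: it is the step that bridges the purely linear/positivity condition on the subspace $E$ with the multiplicative structure of the ambient unital C*-algebra $E^\sharp$. That bridge is furnished by combining Theorem~\ref{Theorem: main} (converting approximate generation by positives into extremality of $\delta_0$) with the Arveson/Stinespring characterization of dilation-maximal ucp maps on C*-algebras as $*$-homomorphisms; once this bridge is in place, everything else reduces to standard C*-algebraic facts about ideals and unitizations.
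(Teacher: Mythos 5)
Your proof is correct and follows essentially the same route as the paper: $(1)\Rightarrow(2)$ via the decomposition of C*-algebra elements into positives, $(2)\Rightarrow(3)$ trivially, and $(3)\Rightarrow(4)$ by combining Theorem~\ref{Theorem: main} with the fact that a dilation-maximal ucp map on a unital C*-algebra is a $*$-homomorphism. The one place you diverge is $(4)\Rightarrow(1)$: the paper simply cites \cite[Lemma 10.8]{KKM2023} for the full equivalence $(1)\Leftrightarrow(4)$, whereas you give a direct argument, observing that $E=\ker\delta_0^\sharp$ is a closed self-adjoint two-sided ideal of the C*-algebra $E^\sharp$ and hence itself a C*-algebra (with the operator system structure inherited from $E^\sharp$ agreeing with the original one, since the unitization embedding is a complete order embedding). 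This is a sound and self-contained replacement for the citation, and since you only need the one direction for your implication cycle, nothing is lost.
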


\begin{proof}
Note that $(1) \implies (2)$ by the functional calculus. That $(2) \implies (3)$ is immediate. The implication $(3) \implies (4)$ follows from Theorem~\ref{Theorem: main}, using the fact that maximal dilations correspond to *-homomorphisms on a C*-algebra. Finally, by \cite[Lemma 10.8]{KKM2023}, $(1)$ and $(4)$ are equivalent. 
\end{proof}

\begin{example}\label{Example: no positives}
For an example where $E^\sharp$ is a C*-algebra but $E$ is not, consider the operator system
\begin{align*}
	A([-1,1],0) = \left\{[-1,1] \xrightarrow{f} \C: f\text{ is affine and }f(0) = 0\right\} \subseteq C([-1,1])\;.
\end{align*}
By \cite[Corollary 4.7]{KKM2023}, the unitization admits the isomorphism $A([-1,1],0)^\sharp = A([-1,1]) \cong \C^2$, which is a C*-algebra. However, $A([-1,1],0)$ admits no positive elements. Therefore, $A([-1,1],0)$ cannot be a C*-algebra. 
\end{example}

\begin{proposition}\label{Proposition: cpc limits}
	Let $E$ be an operator system. Suppose that we have a collection of C*-algebras $\calM_i$ and positive maps $\rho_i: \calM_i \to E$ such that $E = \cl{\bigcup_i \rho_i(\calM_i)}$. The operator system $E$ is approximately generated by positives. 
\end{proposition}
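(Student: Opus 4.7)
The plan is to show directly that $\bigcup_i \rho_i(\calM_i)$ is contained in $\Span(E_+)$, and then invoke density.

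First, I would fix $i$ and an arbitrary $a \in \calM_i$. Using the standard Jordan decomposition inside the C*-algebra $\calM_i$, I would write $a = (a_1 - a_2) + i(a_3 - a_4)$ with each $a_k \in (\calM_i)_+$: take $\Real a = (a+a^*)/2$ and $\Imag a = (a-a^*)/(2i)$, then split each self-adjoint part using the positive/negative parts coming from continuous functional calculus. This is the only place the C*-algebra hypothesis is used, and it is the step that fails for general operator systems (which is precisely the content of Example~\ref{Example: no positives}).

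Next, since $\rho_i$ is a positive linear map, each $\rho_i(a_k)$ lies in $E_+$, so
\begin{align*}
    \rho_i(a) = \rho_i(a_1) - \rho_i(a_2) + i\bigl(\rho_i(a_3) - \rho_i(a_4)\bigr) \in \Span(E_+).
\end{align*}
This shows $\rho_i(\calM_i) \subseteq \Span(E_+)$ for every $i$, and hence $\bigcup_i \rho_i(\calM_i) \subseteq \Span(E_+)$.

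Finally, taking closures and using the hypothesis that $E = \cl{\bigcup_i \rho_i(\calM_i)}$, we obtain $E \subseteq \cl{\Span(E_+)} \subseteq E$, so $E$ is approximately generated by positives.

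There is no real obstacle: the proposition is essentially a transport-of-structure argument, with the only point requiring care being the implicit linearity of the word ``positive map''. Since a positive linear map from a C*-algebra is automatically self-adjoint preserving (positives of $\calM_i$ are sent to positives, and these span $\calM_i$ over $\C$), the decomposition above lands exactly where we need.
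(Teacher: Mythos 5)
Your proof is correct and is essentially the paper's own argument: the paper likewise decomposes an arbitrary element of $\calM_i$ as a linear combination $\sum_{k=0}^3 i^k x_k$ of four positives, pushes this through the positive map $\rho_i$ to land in $\Span(E_+)$, and concludes by density. Your version just spells out the real/imaginary and Jordan decomposition steps more explicitly.
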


\begin{proof}
	Since for any $x \in \calM_i$, $x = \sum_{k=0}^3 i^k x_k$ for $x_k \in (\calM_i)_+$, it follows that $\rho_i(x)$ is a linear combination of positive elements. Since $\bigcup_i \rho_i(\calM_i)$ is dense in $E$, the result follows. 
\end{proof}

\begin{proof}[Proof of Corollary \ref{Corollary: limit of C*-algebras}]
By Theorem~\ref{Theorem: main}, it suffices to show that $\Span(E_+)$ is dense in $E$. This is the content of Proposition~\ref{Proposition: cpc limits}.  
\end{proof}

Theorem~\ref{Theorem: cpap} tells us that approximate positive generation of the operator system is a necessary criterion for the CPAP. The next proposition demonstrates that this phenomenon is not typical. 
\begin{proposition}
    Suppose that $E$ is a unital operator system generating a C*-algebra $A$. Let $\phi$ be a state on $E$ with kernel $F$. If the non-unital operator system $F$ is approximately generated by positives then $\phi$ extends to a *-homomorphism on $A$. In particular, if $A$ admits no one-dimensional representations, then $F$ is not approximately generated by positives. 
\end{proposition}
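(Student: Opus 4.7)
The plan is to extend $\phi$ to a state on $A$, apply the GNS construction, and then use the maximality of the zero map on $F$ supplied by Theorem~\ref{Theorem: main} to force the GNS representation to collapse to a character.

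First, by the Hahn--Banach theorem (or Arveson's extension theorem) extend $\phi$ to a state $\psi: A \to \mathbb{C}$, and let $(\pi, H, \xi)$ be its GNS triple, with $\xi$ a cyclic unit vector satisfying $\psi(a) = \langle \pi(a)\xi, \xi\rangle$. Writing $V: \mathbb{C} \to H$ for the isometry $V(\lambda) = \lambda \xi$, we get $V^* \pi V = \psi$, and in particular $V^* (\pi|_F) V = \psi|_F = 0$. So $\pi|_F$ is a cpcc dilation of the zero map $\delta_0: F \to \{0\}$.

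Second, because $F$ is approximately generated by positives, Theorem~\ref{Theorem: main} together with the equivalences in the preceding Proposition guarantee that $\delta_0$ on $F$ is maximal in dilation order. The dilation $\pi|_F$ must therefore split trivially as $0 \oplus \zeta$ along $H = V(\mathbb{C}) \oplus V(\mathbb{C})^\perp$, which translates to $\pi(x)\xi = 0$ for every $x \in F$. Since $1 \in E$ and $y - \phi(y)\cdot 1 \in F$ for every $y \in E$, this yields $\pi(y)\xi = \phi(y)\xi$; that is, $\xi$ is an eigenvector for each $\pi(y)$ with $y \in E$.

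Third, since $E$ generates $A$, the $*$-subalgebra $\mathcal{B} \subseteq A$ consisting of linear combinations of finite products of elements of $E$ is norm-dense in $A$. A short induction on the length of a product shows that $\pi(y_1 \cdots y_n)\xi = \phi(y_1) \cdots \phi(y_n)\,\xi \in \mathbb{C}\xi$ for $y_i \in E$; by linearity this gives $\pi(\mathcal{B})\xi \subseteq \mathbb{C}\xi$, and by continuity of $\pi$ and closedness of $\mathbb{C}\xi$ we conclude $\pi(A)\xi \subseteq \mathbb{C}\xi$. Cyclicity of $\xi$ then forces $H = \overline{\pi(A)\xi} \subseteq \mathbb{C}\xi$, so $\dim H = 1$ and $\psi$ is a character on $A$ extending $\phi$. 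The final assertion is the contrapositive: if $A$ has no one-dimensional representations, no such character $\psi$ can exist, so $F$ cannot be approximately generated by positives. The main content I expect to lie in the second paragraph, namely converting the abstract maximality of $\delta_0$ on $F$ into the concrete identity $\pi(x)\xi = 0$ on $F$; once this is in hand, the remaining cyclic-vector and generation argument is essentially mechanical.
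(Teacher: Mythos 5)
Your proof is correct. The core step is the same as the paper's: a dilation of (an extension of) $\phi$ restricts to a dilation of $\delta_0$ on $F$, and Theorem~\ref{Theorem: main} together with the equivalence of extremality and dilation-maximality forces the off-diagonal corners over $F$ to vanish. Where you diverge is the endgame. The paper transfers the maximality of $\delta_0^\sharp$ on $F^\sharp$ through the ucp bijection $q\colon F^\sharp \to E$ to conclude that $\phi$ itself is maximal in dilation order, and then simply cites Arveson's result (\emph{Notes on the unique extension property}, Proposition 2.2) that a dilation-maximal ucp map extends to a $*$-homomorphism on the generated C*-algebra. You instead prove that conclusion by hand in the special case of a state: run GNS on an arbitrary state extension $\psi$ of $\phi$ to $A$, use maximality of $\delta_0$ on $F$ to get $\pi(x)\xi = 0$ for $x \in F$, deduce $\pi(y)\xi = \phi(y)\xi$ from $y - \phi(y)1 \in F$, and then use multiplicativity of $\pi$ plus cyclicity of $\xi$ to collapse $H$ to $\C\xi$. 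Your route is more self-contained and makes the mechanism visible (it in fact shows \emph{every} state extension of $\phi$ is a character, i.e.\ the unique extension property), at the cost of the explicit eigenvector induction; the paper's route is shorter and records the intermediate fact that $\phi$ is dilation-maximal, which is the form in which the statement plugs into the boundary-representation machinery used elsewhere in the paper. Both are valid; the one point worth being careful about in your version is that the maximality of $\delta_0$ supplied by Theorem~\ref{Theorem: main} applies to dilations into $B(H)$ for $H$ of arbitrary dimension, which is exactly how the paper's proof of that theorem establishes it, so your application to the GNS space is legitimate.
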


\begin{proof} 
    Suppose that $F$ is approximately generated by positives. By Theorem~\ref{Theorem: main}, this means that $\delta_0^\sharp: F^\sharp \to \C$ is maximal in dilation order. Let $q: F^\sharp \to E$ be the ucp bijection induced by the inclusion $F \subseteq E$ and note that $\delta_0^\sharp = \phi \circ q$. We claim that $\phi$ is maximal in dilation order. To this end, fix a ucp map $\psi: E \to B(H)$ as well as an isometry $V: \C \hookrightarrow H$ such that $\phi(x) = V^* \psi(x) V$ for all $x \in E$. Let $W: \{V(1)\}^\perp \hookrightarrow H$ be the inclusion map so that we have the block $2 \times 2$-matrix decomposition 
    \begin{align*} 
        \psi(x) = \left[\begin{array}{cc} \phi(x) & V^*\psi(x)W \\ W^*\psi(x)V & W^*\psi(x)W \end{array}\right]
    \end{align*}
    for all $x \in E$. By precomposing with $q$, we get the identity 
    \begin{align*} 
        \psi\circ q(x) = \left[\begin{array}{cc} \delta_0^\sharp(x) & V^*\psi\circ q(x)W \\ W^*\psi\circ q(x)V & W^*\psi\circ q(x)W \end{array}\right]\;.
    \end{align*} 
    That is, $\psi \circ q$ is a dilation of $\delta_0^\sharp$. Since $\delta_0^\sharp$ is maximal in dilation order, this means that $V^*\psi \circ q(x)W = 0$ and $W^* \psi \circ q(x)V = 0$. Since $q$ is a bijection, this implies that $\psi$ is a trivial dilation of $\phi$. Thus, $\phi$ is maximal in dilation order. By \cite[Proposition 2.2]{arveson2003notes}, $\phi$ extends to a *-homomorphism on $A$. The \emph{In particular} follows. 
\end{proof}

\section{The Completely Positive Approximation Property}

The goal of this section is to prove Theorem~\ref{Theorem: cpap}. 
\begin{definition}\label{Definition: cpap}
	Let $E$ be an operator system.
    \begin{enumerate}
        \item The operator system $E$ has the \emph{completely positive approximation property} (CPAP) if there exists a net of cpcc maps $\psi_i : E \to M_{n_i}$ and $\phi_i: M_{n_i} \to E$ for $n_i \geq 1$ such that $\lim_{i \to \infty} \phi_i \circ \psi_i = \id_E$, where the limit is taken in the point-norm topology. 

        \item The operator system $E$ has the \emph{completely contractive approximation property} (CCAP) if there exists a net of completely contractive maps $\psi_i : E \to M_{n_i}$ and $\phi_i: M_{n_i} \to E$ for $n_i \geq 1$ such that $\lim_{i \to \infty} \phi_i \circ \psi_i = \id_E$, where the limit is taken in the point-norm topology. 
        
        \item The operator system $E$ is said to be \emph{locally reflexive} if whenever $F$ is a finite dimensional operator space and $\psi:F \to E^{**}$ is a completely contractive map, for all $\epsilon > 0$ and $\chi \subseteq E^*$ finite, there is a completely bounded map $\psi_{\epsilon,\chi}: F \to E$ with $\|\psi_{\epsilon,\chi}\|_{cb} \leq 1 + \epsilon$ such that 
        \begin{align*} 
            \Angle{f, \psi_{\epsilon,\chi}(x)} = \Angle{f, \psi(x)}
        \end{align*} 
        for all $x \in F$ and $f \in \chi$. 
    \end{enumerate}
\end{definition}

In \cite[Theorem 4.5]{EOR}, it is shown that $E$ has the CCAP if and only if $E^{**}$ is an injective von Neumann algebra and $E$ is locally reflexive. We first show that local reflexivity when $E^{**}$ is injective is automatic for operator systems. For what follows, we adopt the terminology of \cite{KirchbergWassermann} and say that a \emph{C*-system} is an operator system $E$ for which $E^{**}$ is a von Neumann algebra. 
\begin{lemma}\label{Lemma: Local Reflexivity}
	Let $E$ be a C*-system. If the unitization $E^\sharp$ is locally reflexive then $E$ is locally reflexive. In particular, if $E^{**}$ is an injective unital operator system then $E$ is locally reflexive. 
\end{lemma}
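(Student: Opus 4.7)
The plan is to exploit the canonical split short exact sequence of operator systems
\begin{align*}
0 \to E \to E^\sharp \xrightarrow{\pi} \mathbb{C} \to 0
\end{align*}
where $\pi$ denotes the co-unit that projects onto the scalar part of the unitization. Dualizing twice identifies $E^{**}$ with $\ker \pi^{**} \subseteq (E^\sharp)^{**}$. Thus any completely contractive $\psi: F \to E^{**}$ may be viewed as $\tilde\psi: F \to (E^\sharp)^{**}$ landing in $\ker \pi^{**}$. The idea is to run local reflexivity on $E^\sharp$ while adding $\pi$ to the list of functionals on which we force agreement, which will automatically push the output back into $E$.

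More concretely: given the data $(F,\psi,\epsilon, \chi)$ required to witness local reflexivity of $E$, I would extend each $f \in \chi$ to $\tilde f \in (E^\sharp)^*$ by $\tilde f(a + \lambda 1_{E^\sharp}) := f(a)$, using the vector space splitting of $E^\sharp$, and then apply local reflexivity of $E^\sharp$ to the data $(F, \tilde\psi, \epsilon, \tilde\chi \cup \{\pi\})$, where $\tilde\chi = \{\tilde f: f \in \chi\}$. This produces a completely bounded $\psi': F \to E^\sharp$ with $\|\psi'\|_{cb} \leq 1+\epsilon$. The matching condition against $\pi$ gives $\pi(\psi'(x)) = \pi^{**}(\tilde\psi(x)) = 0$ for every $x \in F$, which forces $\psi'(F) \subseteq \ker \pi = E$. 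The matching conditions against each $\tilde f$ then translate into $f(\psi'(x)) = f(\psi(x))$ once one checks that $\tilde f$ restricts to $f$ on $E$ and pairs correctly with elements of $E^{**}$ under the embedding $E^{**} \hookrightarrow (E^\sharp)^{**}$, delivering the desired witness $\psi_{\epsilon,\chi} := \psi'$ for local reflexivity of $E$.

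For the \emph{In particular} assertion, the plan is to reduce to the first part by proving that $E^\sharp$ is locally reflexive whenever $E^{**}$ is an injective unital operator system. Using that $E$ is a C*-system, biduality of the split exact sequence yields a direct sum decomposition $(E^\sharp)^{**} \cong E^{**} \oplus_\infty \mathbb{C}$ of von Neumann algebras. If $E^{**}$ is injective then so is the direct summand $E^{**} \oplus \mathbb{C}$, and hence $(E^\sharp)^{**}$. Since $E^\sharp$ is unital, the Han--Paulsen theorem \cite[Theorem 3.5]{HanPaulsen} then gives the CPAP for $E^\sharp$, and in particular that $E^\sharp$ is locally reflexive, so the first part of the lemma applies.

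The main obstacle I anticipate is bookkeeping the various dual identifications: confirming that the image of $E^{**}$ inside $(E^\sharp)^{**}$ induced by $E \subseteq E^\sharp$ really does coincide with $\ker \pi^{**}$, that the extensions $\tilde f$ pair with elements of $E^{**}$ exactly as the original $f$ did, and that the direct sum decomposition $(E^\sharp)^{**} \cong E^{**} \oplus_\infty \mathbb{C}$ respects the operator system structure. These are routine Banach space and von Neumann algebra duality manipulations, but they need to be executed carefully in order for the quick transfer arguments above to go through.
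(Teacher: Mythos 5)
Your proposal is correct and follows essentially the same route as the paper: extend the functionals in $\chi$ to $E^\sharp$, include the co-unit $\pi=\delta_0^\sharp$ in the finite set so that the approximant produced by local reflexivity of $E^\sharp$ is forced into $\ker\delta_0^\sharp=E$, and deduce the \emph{in particular} clause from injectivity of $(E^\sharp)^{**}=E^{**}\oplus\C$ via Han--Paulsen. The only cosmetic difference is that the paper quotes \cite[Corollary 3.6]{HanPaulsen} for local reflexivity of $E^\sharp$ directly (after invoking Choi--Effros to see $E^{**}$ is a C*-algebra), whereas you pass through the CPAP of $E^\sharp$; both are valid.
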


\begin{proof}
	Let $F$ be a finite dimensional operator space and let $\psi: F \to E^{**}$ be a completely contractive map. Let $\epsilon > 0$ and fix $\chi \subseteq E^*$ finite. Assume by extending $\chi$ if necessary that $\delta_0 \in \chi$. By the proof of \cite[Theorem 6.3]{HumeniukKennedyManor}, we know that $(E^\sharp)^{**} = (E^{**})^\sharp = E^{**} \oplus \C$. Let $\iota: E^{**} \hookrightarrow (E^\sharp)^{**}: x \mapsto x \oplus 0$ be the canonical embedding. Since $\iota \circ \psi: F \to (E^\sharp)^{**}$ is a completely contractive map, there is a map $\phi: F \to E^\sharp$ with $\|\phi\|_{cb} \leq 1 + \epsilon$ such that 
	\begin{align*}
		\Angle{f^\sharp, \phi(x)} = \Angle{f^\sharp, \iota \circ \psi(x)} = \Angle{f, \psi(x)}
	\end{align*} 
	for all $f \in \chi$ and $x \in F$. In particular, $\Angle{\delta_0^\sharp, \phi(x)} = 0$ for all $x \in F$. That is, $\phi(M_n) \subseteq E$. 

    If $E^{**}$ is unital and injective, then by \cite[Theorem 3.1]{ChoiEffros}, $E^{**}$ is a C*-algebra. By \cite[Corollary 3.6]{HanPaulsen}, $E^\sharp$ is locally reflexive, from which local reflexivity of $E$ follows. 
\end{proof}

For the next result, we set the following notation: if $E$ and $F$ are operator systems, then $CP(E,F)$ denotes the set of completely positive maps with domain $E$ and codomain $F$. The following is essentially \cite[Lemma 2.1 (iii)]{BlecherMagajna}.
\begin{lemma}\label{Lemma: weak* density of positives}
    Let $E$ be an operator system and let $n \geq 1$. The point-weak*-closure of $CP(M_n,E)$ is $CP(M_n,E^{**})$. 
\end{lemma}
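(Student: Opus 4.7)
The plan is to translate the statement, via the Choi matrix correspondence, into a density assertion about positive cones in matrices over $E$ versus over $E^{**}$, and then to invoke the definition of the positive cone in the bidual operator system.

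First I would record the Choi bijection in the operator system setting: for any operator system $F$, the rule $\Phi \mapsto C_\Phi := \sum_{i,j=1}^n e_{ij} \otimes \Phi(e_{ij}) \in M_n(F)$ is a linear bijection between $CP(M_n, F)$ and $M_n(F)_+$. The direction ``$\Phi$ completely positive implies $C_\Phi \in M_n(F)_+$'' is $\Phi^{(n)}$ applied to the positive element $[e_{ij}]_{ij} \in M_n(M_n)_+$; the converse is the standard argument via the matrix-unit basis. Applying this with $F = E$ and with $F = E^{**}$, and using the canonical identification $M_n(E)^{**} \cong M_n(E^{**})$ of operator systems, point-weak* convergence $\Phi_\alpha \to \Phi$ of maps $M_n \to E^{**}$ translates to weak*-convergence of Choi matrices $C_{\Phi_\alpha} \to C_\Phi$ in $M_n(E)^{**}$, since $\{e_{ij}\}$ spans $M_n$ and weak*-convergence in $M_n(E)^{**}$ is just entrywise weak*-convergence in $E^{**}$.

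The lemma thereby reduces to showing that the canonical image of $M_n(E)_+$ is weak*-dense in $M_n(E^{**})_+$ inside $M_n(E)^{**}$. This density, however, is exactly the operator-system definition of the positive cone of the bidual used in the paper (compare Werner's framework and \cite{HumeniukKennedyManor}): $M_n(E^{**})_+$ is by construction the weak*-closure of the image of $M_n(E)_+$. The easy containment --- that the point-weak* closure of $CP(M_n, E)$ is contained in $CP(M_n, E^{**})$ --- follows immediately from weak*-closedness of the positive cone $M_n(E^{**})_+$ in $M_n(E^{**})$.

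The main point requiring care, and the chief potential obstacle, is verifying that the matrix-level identification $M_n(E^{**})_+ = \overline{M_n(E)_+}^{w^*}$ inside $M_n(E)^{**}$ is indeed the operative one for the possibly non-unital operator systems in use here; that is, that the operator system structure on the bidual is compatible with the Banach space identification $M_n(E)^{**} \cong M_n(E^{**})$ at every matrix level. Modulo this standard structural fact about biduals in the non-unital operator system category, the lemma drops out directly from the Choi picture.
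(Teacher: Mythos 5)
Your reduction via the Choi correspondence is exactly the paper's first step: both proofs translate the statement into the claim that $M_n(E)_+$ is weak*-dense in $M_n(E^{**})_+$. The problem is what you do with that claim afterwards. You assert that this density holds ``by construction,'' i.e.\ that $M_n(E^{**})_+$ is \emph{defined} as the weak*-closure of the image of $M_n(E)_+$. That is not the definition in the framework the paper works in (Werner, Choi--Effros, \cite{HumeniukKennedyManor}): the bidual carries the \emph{dual} matrix ordering, so $M_n(E^{**})_+$ consists of those elements that pair non-negatively against the positive cone of $M_n(E)^*$ (equivalently, whose associated maps $E^* \to M_n$ are completely positive). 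The paper's own proof makes this explicit --- it derives a contradiction with ``the definition of $E^{**}_+$'' by exhibiting a positive functional $f \in E^*$ with $\Angle{f,x} < 0$. The coincidence of the dual-cone description with the weak*-closure description is precisely the content of the lemma, so declaring it definitional assumes what is to be proved.

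The missing step is a bipolar/Hahn--Banach argument, which is what the paper supplies: if $x \in E^{**}_+$ did not lie in the weak*-closure of $E_+$, separate it from that closed convex cone by some $\phi \in E^*$ with $\Real(\phi)(a) < \alpha < \Real(\phi)(x)$ for all $a \in E_+$; since $E_+$ is invariant under positive scaling this forces $\Real(\phi) \leq 0$ on $E_+$; then $f := -\phi - \cl{\phi((\cdot)^*)}$ is a positive functional on $E$ with $\Angle{f,x} < 0$, contradicting $x \in E^{**}_+$. You flagged the identification $M_n(E^{**})_+ = \cl{M_n(E)_+}^{w^*}$ as the ``chief potential obstacle'' but then waved it through as a standard structural fact; it is in fact the whole lemma, and you need to either run the separation argument above or correctly invoke the bipolar theorem for the dual pair $(E^*, E^{**})$ together with the identification of the dual cone of $E_+$ with $E^*_+$. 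As written, the proposal has a genuine gap at its central step.
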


\begin{proof} 
    By Choi's Theorem \cite[Theorem 3.14]{PaulsenBook}, for any operator system $X$, there is a correspondence between $CP(M_n,X)$ and $M_n(X)_+$ such that the point-weak-topology corresponds to the weak-topology on $M_n(X)$. Thus, it suffices to show that the weak*-closure of $M_n(E)_+$ is $M_n(E)^{**}_+$. For notational convenience, we set $n = 1$. Suppose by way of contradiction that there is an $x \in E^{**}_+$ that does not belong to the weak*-closure of $E_+$. By the Hahn-Banach separation theorem, there is linear map $\phi: E \to \C$ and $\alpha > 0$ such that 
    \begin{align*} 
        Re(\phi)(a) < \alpha < Re(\phi)(x)\;
    \end{align*} 
    for all $a \in E_+$. Since $\lambda E_+ \subseteq E_+$ for all $\lambda > 0$, we get that $Re(\phi)(a) \leq 0$. In particular, setting 
    \begin{align*} 
        f: E \to \C: x \mapsto -\phi(x) - \cl{\phi(x^*)}\;, 
    \end{align*} 
    we get a positive functional on $E$ with $\Angle{f, x} < -\alpha < 0$. This contradicts the definition of $E^{**}_+$. 
\end{proof} 

The following lemma is the main difficulty in the proof of Theorem \ref{Theorem: cpap}. 
\begin{lemma}\label{Lemma: density argument}
    Let $E$ be a C*-system. If $\phi: M_n \to E^{**}$ is a ucp map, then there is a net $\phi_i: M_n \to E$ of cpcc maps such that $\phi_i$ converges to $\phi$ in the point-weak* topology. 
\end{lemma}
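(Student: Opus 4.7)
The plan is to argue by contradiction using Hahn-Banach separation and Sion's minimax theorem. Via Choi's theorem, ucp maps $\phi: M_n \to E^{**}$ correspond to positive Choi matrices $X = [\phi(e_{ij})] \in M_n(E^{**})_+$ with $\sum_i X_{ii} = 1_{E^{**}}$, while cpcc maps $\psi: M_n \to E$ correspond to $Y \in M_n(E)_+$ satisfying $\|\sum_i Y_{ii}\|_E \leq 1$. Since $M_n(E^{**}) = M_n(E)^{**}$, the statement reduces to showing that $X$ lies in the weak*-closure of the convex set $K := \{Y \in M_n(E)_+ : \|\sum_i Y_{ii}\| \leq 1\}$ in $M_n(E^{**})$.

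Suppose for contradiction that $X \notin \overline{K}^{w^*}$. Hahn-Banach separation yields $h \in M_n(E^*) = M_n(E^{**})_*$ and $\alpha \geq 0$ with $\Real\langle h, Y\rangle \leq \alpha$ for $Y \in K$ and $\Real\langle h, X\rangle > \alpha$; the $[0,1]$-scaling invariance of $K$ strengthens this to $\Real\langle h, Y\rangle \leq \alpha\|\sum_i Y_{ii}\|$ for every $Y \in M_n(E)_+$, and we may take $h = h^*$. Let $\pi: M_n(E) \to E$ denote the partial trace $\pi(Y) = \sum_i Y_{ii}$ and let $S(E^\sharp)$ be the weak*-compact state space of the unital operator system $E^\sharp$. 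Because $\|a\|_E = \|a\|_{E^\sharp} = \sup_{g \in S(E^\sharp)} g(a)$ for $a \in E_+$, the bilinear function $f(g, Y) := \alpha g(\pi(Y)) - h(Y)$ on $S(E^\sharp) \times M_n(E)_+$ satisfies $\sup_g f(g, Y) = \alpha\|\pi(Y)\| - h(Y) \geq 0$, with outer infimum over $Y$ attained at $Y = 0$ of value $0$. Applying Sion's minimax theorem (valid since $S(E^\sharp)$ is compact convex and $f$ is continuous and affine in each variable), and noting that for each $g$ one has $\inf_Y f(g, \cdot) \in \{0, -\infty\}$, one then produces a state $g^* \in S(E^\sharp)$ with $h(Y) \leq \alpha g^*(\pi(Y))$ for every $Y \in M_n(E)_+$.

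Applying Lemma \ref{Lemma: weak* density of positives}, choose a net $Y^{(\lambda)} \in M_n(E)_+$ with $Y^{(\lambda)} \to X$ weak*. Passing to the limit using the weak*-continuity of $h$ and the canonical extension of $g^*$ to $(E^\sharp)^{**}$,
\begin{equation*}
\Real\langle h, X\rangle \leq \alpha g^*(\pi(X)) = \alpha g^*(1_{E^{**}}) \leq \alpha,
\end{equation*}
where the last bound uses the identification $(E^\sharp)^{**} = (E^{**})^\sharp = E^{**} \oplus \C$ from the proof of Lemma \ref{Lemma: Local Reflexivity}: under it $1_{E^{**}}$ embeds as $(1_{E^{**}}, 0) \leq (1_{E^{**}}, 1) = 1_{(E^\sharp)^{**}}$, so the state $g^*$ satisfies $g^*(1_{E^{**}}) \leq g^*(1_{(E^\sharp)^{**}}) = 1$. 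This contradicts $\Real\langle h, X\rangle > \alpha$. The main obstacle is executing the minimax step: verifying Sion's hypotheses and realizing the partial-trace norm $\|\pi(Y)\|_E$ as a supremum over the weak*-compact $S(E^\sharp)$, which rests on the isometric unitization $E \subseteq E^\sharp$ afforded by the operator system structure.
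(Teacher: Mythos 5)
Your argument is correct, but it proceeds by a genuinely different route from the paper's. The paper constructs the approximating net explicitly: using \cite[Proposition 2.8]{Huang2011} and \cite{Ng1969} it produces an increasing approximate unit $(e_\lambda)$ in the open unit ball of $E_+$ with $e_\lambda\to 1$ weak*, passes to the standard representation of $E^{**}$ so that $\sqrt{e_\lambda}\to 1$ strongly, compresses $\phi$ to the strictly contractive cp maps $\sqrt{e_\lambda}\,\phi(\cdot)\,\sqrt{e_\lambda}$, approximates these by $E$-valued cp maps via Lemma~\ref{Lemma: weak* density of positives}, and then uses a convex-combination trick (norm and weak closures of convex sets coincide) together with $\|e_\lambda\|<1$ to force the approximants to be contractive, finishing with an iterated-limit argument. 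You instead dualize: after the Choi-matrix reformulation the claim is that $X$ lies in the weak*-closure of $K$, and you eliminate a separating functional $h$ by using Sion's minimax theorem to convert the homogeneous estimate $\Real\Angle{h,Y}\le\alpha\|\pi(Y)\|$ into domination $\Real\Angle{h,Y}\le\alpha g^*(\pi(Y))$ by a single state $g^*\in S(E^\sharp)$, after which Lemma~\ref{Lemma: weak* density of positives} and the order inequality $(1_{E^{**}},0)\le 1_{(E^\sharp)^{**}}$ in $(E^\sharp)^{**}=E^{**}\oplus\C$ give $\Real\Angle{h,X}\le\alpha g^*(1_{E^{**}})\le\alpha$, a contradiction. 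Both proofs rest on the same two inputs --- the weak*-density of $M_n(E)_+$ in $M_n(E^{**})_+$ and the identification $(E^\sharp)^{**}=E^{**}\oplus\C$ --- but yours trades the approximate-unit and standard-form machinery and the attendant net bookkeeping for a nonconstructive separation argument plus Sion's theorem. The delicate points all check out: $K$ is convex with $0\in K$ so $\alpha\ge 0$; the scaling upgrade is legitimate because $Y\ge 0$ with $\sum_i Y_{ii}=0$ forces $Y=0$; $\|a\|=\sup_{g\in S(E^\sharp)}g(a)$ for $a\in E_+$ because the unitization is an isometric complete order embedding; and the supremum in the minimax is attained since each inner infimum over the cone $M_n(E)_+$ lies in $\{0,-\infty\}$.
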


\begin{proof}
    By \cite[Proposition 2.8]{Huang2011} along with the proof of \cite[Proposition 1]{Ng1969}, the set 
    \begin{align*} 
        \Lambda = \{x \in E_+ : \|x\| < 1 \} 
    \end{align*}
    is convex, directed under the ordering on $E_+$, and, if we define the net $e: \Lambda \to E$ by the inclusion map, then we have 
    \begin{align*} 
        \lim_\lambda e_\lambda = 1
    \end{align*} 
    where the limit is taken in the weak*-topology. 

    By \cite[Lemma 6.2 and Theorem 6.3]{HumeniukKennedyManor}, we know that $C^*_{env}(E^\sharp)^{**} = (E^\sharp)^{**} = E^{**} \oplus \C$. Furthermore, \cite[Theorem 6.3]{HumeniukKennedyManor} tells us that $\delta_0$ is an extremal point. Thus, $\delta_0^\sharp$ is an irreducible representation on $C^*_{env}(E^\sharp)$. Let $A = \ker \delta_0^\sharp$ and observe that $A^{**} = E^{**}$. 

    We represent $A^{**}$ by its standard representation in $B(H)$. In this way, we may treat $E^{**}$ with the weak*-topology as the $\sigma$-WOT on $B(H)$. In particular, due to the fact that for all $\lambda \in \Lambda$, $\sqrt{e_\lambda} \geq e_\lambda$, we get $\lim_\lambda \sqrt{e_\lambda} = 1$ in the SOT.

    Consider for all $\lambda \in \Lambda$ the completely positive map 
    \begin{align*}
        \psi_\lambda : M_n \to E^{**}: \rho \mapsto \sqrt{e_\lambda}\psi(\rho) \sqrt{e_\lambda}\;.
    \end{align*} 
    Note that $\|\psi_\lambda\|_{cb} = \|\psi_\lambda(1)\| = \|e_\lambda\| < 1$, and thus $\psi_\lambda$ is also completely contractive. 
    
    Set $\Sigma$ to be the directed set consisting of pairs $(\epsilon, \calF)$, where $\epsilon > 0$ and $\calF \subseteq M_n \times E^*$ finite with the ordering $(\epsilon_1, \calF_1) \leq (\epsilon_2, \calF_2)$ if $\epsilon_1 \geq \epsilon_2$ and $\calF_1 \subseteq \calF_2$. By Lemma~\ref{Lemma: weak* density of positives}, for $\lambda \in \Lambda$ fixed, there are completely positive maps 
    \begin{align*} 
        \psi_{\lambda, \epsilon, \calF}: M_n \to E
    \end{align*} 
    for each $(\epsilon, \calF) \in \Sigma$ such that for all $(\rho, f) \in \calF$, we have 
    \begin{align*} 
        |\Angle{f, \psi_{\lambda,\epsilon,\calF}(\rho) - \psi_\lambda(\rho)}| < \epsilon \;.
    \end{align*} 
    That is, the net $(\psi_{\lambda,\epsilon,\calF})_{(\epsilon,\calF) \in \Sigma}$ converges in the point-weak*-topology to $\psi_\lambda$. The maps $\psi_{\lambda, \epsilon, \calF}$ need not be completely bounded, but since for all $(\epsilon_0, \calF_0) \in \Sigma$ fixed, the weak-closure and norm closure of the set 
    \begin{align*} 
        \conv\{\psi_{\lambda, \epsilon, \calF}(1): (\epsilon, \calF) \geq (\epsilon_0, \calF_0)\} 
    \end{align*} 
    are equal, by replacing $\psi_{\lambda, \epsilon_0, \calF_0}$ by an element of  
    \begin{align*} 
        \calC_{\epsilon_0,\calF_0} := \conv\{\psi_{\lambda, \epsilon,\calF}: (\epsilon,\calF) \geq (\epsilon_0, \calF_0) \}
    \end{align*} 
    if necessary, we may assume that $\psi_{\lambda, \epsilon,\calF}(1)$ converges in norm to $e_\lambda$. Since $e_\lambda$ is a strict contraction, this means that we may assume that $\|\psi_{\lambda, \epsilon_0, \calF_0}(1)\| \leq 1$ by replacing $\psi_{\lambda,\epsilon_0,\calF_0}$ by an appropriate element of $\calC_{\epsilon_0,\calF_0}$. 

    Let $\Lambda \times \Sigma$ be a directed set with the ordering given by $(\lambda_1, \epsilon_1, \calF_1) \leq (\lambda_2, \epsilon_2, \calF_2)$ if $\lambda_1 \leq \lambda_2$ and $(\epsilon_1, \calF_1) \leq (\epsilon_2, \calF_2)$. We claim that $(\psi_{\lambda, \epsilon, \calF})_{\lambda, \epsilon, \calF}$ converges in the point-weak*-topology to $\psi$. 

    Fix $\rho \in M_n$ and $\epsilon_0 > 0$. Since the net $(\|\psi_{\lambda, \epsilon, \calF}(\rho)\|)_{\lambda, \epsilon, \calF}$ is uniformly bounded by $\|\rho\|$, the weak*-topology on $E^{**}$ agrees with the WOT on $B(H)$. An application of the triangle inequality shows that $\lim_\lambda \psi_\lambda(\rho) = \psi(\rho)$ in the SOT. Fix a unit vector $h \in H$. For all $(\lambda, \epsilon, \calF) \in \Lambda \times \Sigma$, 
    \begin{align*} 
        |\Angle{(\psi(\rho) - \psi_{\lambda, \epsilon, \calF})h,h}| &\leq |\Angle{(\psi(\rho) - \psi_\lambda(\rho))h,h}| + |\Angle{(\psi_\lambda(\rho) - \psi_{\lambda, \epsilon, \calF}(\rho))h,h}|\\
        &\leq \|(\psi(\rho)-\psi_\lambda(\rho))h\| + |\Angle{(\psi_\lambda(\rho) - \psi_{\lambda, \epsilon, \calF}(\rho))h,h}|\;.
    \end{align*} 
    Let $\lambda_0 \in \Lambda$ be such that $\|(\psi(\rho) - \psi_\lambda(\rho))h\| < \epsilon_0/2$ for all $\lambda \geq \lambda_0$. As well, let $\calF_0 = \{(\rho, \Angle{\cdot h,h})\}$. For all $(\epsilon, \calF) \geq (\epsilon_0/2,\calF_0)$, 
    \begin{align*} 
        |\Angle{(\psi_\lambda(\rho) - \psi_{\lambda, \epsilon_0, \calF}(\rho))h,h}| < \epsilon_0/2 \;. 
    \end{align*}
    Thus, for all $(\lambda, \epsilon, \calF) \geq (\lambda_0, \epsilon_0/2, \calF_0)$, 
    \begin{align*} 
        |\Angle{(\psi(\rho) - \psi_{\lambda, \epsilon, \calF}(\rho))h,h}| < \epsilon_0 \;. 
    \end{align*} 
    and this gives $\lim_{(\lambda,\epsilon,\calF)}\psi_{\lambda, \epsilon, \calF}(\rho) = \psi(\rho)$ in the WOT. 
\end{proof}

\begin{proof}[Proof of Theorem~\ref{Theorem: cpap}]
That $(1)$ implies $(2)$ is immediate. That $(2)$ and $(3)$ are equivalent is due to Lemma~\ref{Lemma: Local Reflexivity} and \cite[Theorem 4.5]{EOR}. That $(3)$ and $(4)$ are equivalent is \cite[Theorem 6.3]{HumeniukKennedyManor}, \cite[Theorem 3.1]{HanPaulsen}, and Theorem~\ref{Theorem: main}, along with the fact that $(E^\sharp)^{**} = E^{**} \oplus \C$ is an injective von Neumann algebra. It remains to show that $(4)$ implies $(1)$. Since $E^{**}$ is an injective von Neumann algebra, it is semidiscrete. In particular, for all $\calF \subseteq E \times E^*$ finite and $\epsilon > 0$, there is an $n \geq 1$ and ucp maps 
\begin{align*} 
    E^{**} \xrightarrow{\phi} M_n \xrightarrow{\psi} E^{**}
\end{align*} 
such that $|\Angle{f, \psi\circ \phi(\rho) - \rho}|<\epsilon/2$ for all $(\rho,f) \in \calF$. By Lemma~\ref{Lemma: density argument}, there is a net of cpcc maps 
\begin{align*} 
    \left(M_n \xrightarrow{\psi_i} E\right)_{i \in I} 
\end{align*} 
such that $\lim_i \psi_i = \psi$ in the point-weak*-topology. In particular, there is an $i_0 \in I$ such that for all $i \geq i_0$ we have 
\begin{align*} 
    |\Angle{f, \psi_i \circ \phi (\rho) - \psi \circ \phi(\rho)}| < \epsilon/2\;.
\end{align*} 
By the triangle inequality, we get that 
\begin{align*} 
    |\Angle{f, \psi_{i_0} \circ \phi(\rho) - \rho}| < \epsilon \;. 
\end{align*} 
Thus, the identity map on $E$ is the point-weak limit of maps which factor into a finite dimensional C*-algebra. By \cite[Lemma 2.3.4 and Lemma 2.3.6]{BrownOzawa}, this implies that $\id_E$ is the point-norm limit of maps which factor into a finite dimensional C*-algebra. 
\end{proof}

\begin{remark}
    Note that approximate generation by positives in condition (4) of Theorem~\ref{Theorem: cpap} can be replaced with positive generation. This is because, by \cite[Proposition 2.8]{Huang2011}, if $E^{**}$ is a von Neumann algebra, then for all $x \in E$ self-adjoint, there is some $e \in E_+$ such that 
    \begin{align*}
        -e \leq x \leq e\;. 
    \end{align*}
    In particular, $x = \frac{1}{2}(e + x) - \frac{1}{2}(e -x )$, and $e \pm x \geq 0$. 
\end{remark}

The next result shows that the non-unital analogue of \cite[Theorem 4.2]{HanPaulsen} does not have the CPAP. 
\begin{corollary}\label{Corollary: Han-Paulsen Example}
    Let $E = \cl{\Span}\{e_{i,j}: (i,j) \neq (1,1)\} \subseteq B(\ell^2(\N))$, where $e_{i,j}$ denotes the canonical matrix units in $B(\ell^2(\N))$. The operator system $E$ does not have the CPAP. 
\end{corollary}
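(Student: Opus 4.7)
The plan is to apply Theorem~\ref{Theorem: cpap}: it suffices to show that $E$ is not approximately generated by positives. In fact I will show something stronger, namely that $\cl{\Span}(E_+)$ is properly contained in $E$.

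The first step is to identify $E_+$. Since $E$ is the closed span of all matrix units $e_{i,j}$ with $(i,j)\neq(1,1)$, any $x\in E$ has vanishing $(1,1)$ matrix entry, that is $\Angle{x e_1, e_1} = 0$. Therefore, if $x\in E_+$ then $x\ge 0$ as an operator on $\ell^2(\N)$ and $\Angle{xe_1,e_1}=0$. By the standard Cauchy--Schwarz inequality for positive operators (equivalently, by applying \cite[Lemma 3.1]{PaulsenBook} to the $2\times 2$ block $[e_1, (1-e_1e_1^*)]^* x [e_1,(1-e_1e_1^*)]$), this forces $xe_1 = 0$ and $e_1^*x=0$. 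In other words, every element of $E_+$ has vanishing first row and column, so
\begin{align*}
    E_+ \;\subseteq\; F := \cl{\Span}\{e_{i,j}: i,j \geq 2\}\;.
\end{align*}

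The second step is simply to note that $F$ is a norm-closed subspace of $E$ and that $e_{1,2}\in E\setminus F$, so $\cl{\Span}(E_+)\subseteq F\subsetneq E$. Hence $E$ is not approximately generated by positives.

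The final step is to invoke Theorem~\ref{Theorem: cpap}: any operator system with the CPAP must satisfy condition $(4)$, which in particular requires approximate generation by positives. Since $E$ fails this, $E$ does not have the CPAP. There is no real obstacle here beyond the positivity computation in the first step, which is a one-line consequence of $x\ge 0$ and $\Angle{xe_1,e_1}=0\Rightarrow x^{1/2}e_1=0$.
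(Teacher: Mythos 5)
Your proposal is correct and follows essentially the same route as the paper: both arguments show that the vanishing $(1,1)$-entry of any $x \in E_+$ forces the whole first row and column of $x$ to vanish (you via Cauchy--Schwarz for positive operators, the paper via the determinant of a $2\times 2$ compression), and both conclude that $e_{1,2}$ lies at positive distance from $\cl{\Span}(E_+)$, so Theorem~\ref{Theorem: cpap} rules out the CPAP.
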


\begin{proof}
    Let $n > 1$ and let $T \in E_+$. Observe that 
    \begin{align*}
        0 \leq \left[\begin{array}{cc} \Angle{T e_1, e_1} & \Angle{T e_1, e_n} \\ \Angle{T e_n, e_1} & \Angle{T e_n, e_n } \end{array} \right] = \left[\begin{array}{cc} 0 & \Angle{T e_1, e_n} \\ \Angle{T e_n, e_1} & \Angle{T e_n, e_n } \end{array} \right] \;. 
    \end{align*}
    Since the determinant of this positive matrix must be non-negative, we get that 
    \begin{align*}
        |\Angle{Te_1, e_n}|^2 = \Angle{Te_1,e_n}\Angle{T e_n, e_1} \leq 0\;.
    \end{align*}
    That is, $\Angle{Te_1,e_n} = \Angle{Te_n, e_1} = 0$ for all $n \geq 1$. It follows that $dist(e_{1,2}, \cl{\Span}(E_+)) = 1$, and thus $E$ is not approximately generated by positives. By Theorem~\ref{Theorem: cpap}, $E$ cannot have the CPAP. 
\end{proof}

Finally, we give an example of a non-unital operator system with the CPAP that is not a C*-algebra. 
\begin{proposition}\label{Proposition: Han-Paulsen}
    Let $E \subseteq B(H)$ be a unital operator system with the CPAP. The spatial tensor product 
    \begin{align*}
        E \otimes c_0(\N) := \cl{\Span}\{x \otimes f: x \in E, f \in c_0(\N)\} \subseteq B(H \otimes \ell^2(\N))
    \end{align*}
    has the CPAP. Furthermore, if $E \otimes c_0(\N)$ is a  C*-algebra, then $E$ is a C*-algebra. 
\end{proposition}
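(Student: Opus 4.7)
The plan is to dispatch the two claims independently, each by a fairly standard construction.

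For the CPAP, the idea is to tensor the CPAP factorization of $E$ with the obvious approximation of $c_0(\N)$ by coordinate cutoffs. Concretely, I would take the $*$-homomorphism $\alpha_j : c_0(\N) \to M_j$, $f \mapsto \diag(f(1),\ldots,f(j))$, together with its companion $\beta_j : M_j \to c_0(\N)$ that extracts the diagonal and embeds it into the first $j$ coordinates of $c_0(\N)$. Both maps are cpcc (the first is a $*$-homomorphism; the second factors as a diagonal conditional expectation followed by a $*$-homomorphism), and $\beta_j \circ \alpha_j$ is the cutoff to the first $j$ coordinates, which converges to $\id_{c_0(\N)}$ in point-norm. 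Letting $\phi_i : E \to M_{n_i}$ and $\psi_i : M_{n_i} \to E$ be cpcc maps witnessing the CPAP of $E$, the spatial tensors
\[ E \otimes c_0(\N) \xrightarrow{\phi_i \otimes \alpha_j} M_{n_i} \otimes M_j = M_{n_i j} \xrightarrow{\psi_i \otimes \beta_j} E \otimes c_0(\N) \]
are cpcc by the standard fact that spatial tensors of cpcc maps are cpcc. On an elementary tensor $x \otimes f$ the composition evaluates to $\psi_i(\phi_i(x)) \otimes \beta_j(\alpha_j(f))$, which converges to $x \otimes f$ in norm by the triangle inequality. Extending to finite sums by linearity, and then to all of $E \otimes c_0(\N)$ by a routine $3\epsilon$-argument using uniform contractivity of the four approximating maps, yields the CPAP.

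For the furthermore clause, the plan is to test the C*-algebra hypothesis on a single coordinate. Let $f_1 \in c_0(\N)$ be the indicator of $\{1\}$, which is a projection in $B(\ell^2(\N))$, so $(x \otimes f_1)(y \otimes f_1) = xy \otimes f_1$ for every $x, y \in E$. If $E \otimes c_0(\N)$ is a C*-algebra, it is closed under the multiplication inherited from $B(H \otimes \ell^2(\N))$, and hence $xy \otimes f_1 \in E \otimes c_0(\N)$. I would then apply the slice map $\id_{B(H)} \otimes \ev_1$ (evaluation at $1$), which sends $E \otimes c_0(\N)$ into $E$ on elementary tensors and extends by norm continuity, to conclude $xy \in E$. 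Combined with $E$ being norm-closed and self-adjoint, this exhibits $E$ as a C*-subalgebra of $B(H)$.

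The only mildly delicate step is the density passage from elementary tensors to the full spatial tensor product in the CPAP argument; everything else is an immediate consequence of well-established properties of spatial tensor products of cpcc maps.
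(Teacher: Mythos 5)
Your argument for the CPAP of $E \otimes c_0(\N)$ is fine and is essentially the argument in the paper: tensor a cpcc factorization of $\id_E$ through matrix algebras with the coordinate-cutoff factorization of $\id_{c_0(\N)}$, and use that spatial tensors of cpcc maps are cpcc. (The paper first reduces CPAP to CCAP via Theorem~\ref{Theorem: cpap}; your direct route through cpcc maps is equally valid and slightly more self-contained.)

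The \emph{furthermore} clause, however, has a genuine gap: you have misread the hypothesis. In this paper ``$E \otimes c_0(\N)$ is a C*-algebra'' means that it is completely order isomorphic, as an abstract operator system, to \emph{some} C*-algebra --- not that it is closed under the multiplication it inherits from $B(H \otimes \ell^2(\N))$. (This is the reading forced by the surrounding results, e.g.\ the earlier corollary phrased as ``$E$ is isomorphic to a C*-algebra,'' and by the intended application to the Han--Paulsen operator system, which is about abstract C*-structure.) Your slice-map argument only works under the concrete reading: you use $(x \otimes f_1)(y \otimes f_1) = xy \otimes f_1$ for the \emph{ambient} product and then apply $\id \otimes \ev_1$, concluding $xy \in E$. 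Under the actual hypothesis there is no a priori relation between the abstract C*-product $*$ on $E \otimes c_0(\N)$ and the ambient operator product, so this computation is unavailable, and the desired conclusion is likewise the weaker one that $E$ carries \emph{some} C*-algebra structure compatible with its matrix order. This is precisely the difficulty the paper's proof is built to handle: it shows the inclusion $E \otimes c_0(\N) \subseteq A \otimes c_0(\N)$ (with $A = C^*(E)$) is a complete order embedding by extending quasistates coordinatewise, invokes the minimal C*-cover property of a C*-algebra to obtain a $*$-homomorphism $\pi: A \otimes c_0(\N) \to E \otimes c_0(\N)$ splitting the inclusion, uses $\pi$ and multiplicative-domain arguments to show that the abstract product satisfies $(x \otimes e_1) * (y \otimes e_1) = (x \circ y) \otimes e_1$ for a new product $\circ$ on $E$, and finally verifies that $(E, \circ)$ satisfies the C*-identity and has the correct matrix positivity. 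None of this machinery appears in your proposal, and without it the second half of the proposition is not established.
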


\begin{proof}
    By Theorem~\ref{Theorem: cpap}, it suffices to show that $E \otimes c_0(\N)$ has the CCAP. Let $E \xrightarrow{\phi_i} M_n \xrightarrow{\psi_i} E$ and $c_0(\N) \xrightarrow{\phi'_j} M_n \xrightarrow{\psi'_j} c_0(\N)$ be nets of completely contractive maps such that $\lim_i \phi_i \circ \phi_i = \id_E$ and $\lim_j \psi'_j \circ \phi'_j = \id_{c_0(\N)}$. By \cite[Section 2.1]{PisierBook}, we know that the usual spatial tensor product of completely contractive maps $\phi_i \otimes \phi'_j$ and $\psi_i \otimes \psi'_j$ are both completely contractive. For all $x \in E$ and $f \in c_0(\N)$, we have 
    \begin{align*} 
        \lim_{(i,j)} (\psi_i \otimes \psi'_j) \circ (\phi_i \otimes \phi'_j) (x \otimes f) = \lim_{(i,j)} (\psi_i \circ \phi_i)(x) \otimes (\psi'_j \circ \phi'_j)(f) = x \otimes f
    \end{align*} 
    from which the CCAP of $E \otimes c_0(\N)$ follows. 

    Suppose that $E \otimes c_0(\N)$ is a C*-algebra and let $A \subseteq B(H)$ denote the C*-algebra generated by $E$ in $B(H)$. Note that $E \otimes c_0(\N) \subseteq A \otimes c_0(\N)$. Our first claim is that this inclusion is an embedding in the sense that the canonical unital extension $(E \otimes c_0(\N))^\sharp \to (A \otimes c_0(\N))^\sharp$ is a complete order embedding. By definition of the ordering on the unitization, it suffices to show that for all $d \geq 1$, a quasistate on $M_d(E \otimes c_0(\N))$ extends to one on $M_d(A \otimes c_0(\N))$. Since $M_d(A \otimes c_0(N)) = M_d(A) \otimes c_0(\N)$, we may assume $d = 1$ for notational convenience. Let $\phi: E \otimes c_0(\N) \to \C$ be a quasistate. For $n \geq 1$, let $e_n \in c_0(\N)$ denote the characteristic function at $n$. That is, $e_n: m \mapsto \delta_{n,m}$. Let $\phi_n: E \to \C: x \mapsto \phi(x \otimes e_n)$. Since $\phi_n$ is a cpcc map on a unital operator system, by Arveson's extension theorem, there is an extension to a cpcc map $\psi_n: A \to \C$. Using the identification $c_0(\N,A) = A \otimes c_0(\N)$, setting $A_{sa} = \{a \in A: a^* = a\}$, let 
    \begin{align*}
        \psi_0 : c_0(\N, A_{sa}) \to \R: f \mapsto \sum_{n \geq 1} \psi_n(f(n))\;. 
    \end{align*}
    To see that the series on the right hand side converges, note that for all $n \geq 1$, $-\|f(n)\| 1 \leq f(n) \leq \|f(n)\| 1$ and that the function $F: \N \to \C: n \mapsto \|f(n)\|$ is in $c_0(\N)$. Since $\psi_n$ is positive, $-\|f(n)\|\psi_n(1) \leq \psi_n(f(n)) \leq \|f(n)\|\psi_n(1)$. In particular, $|\psi_n(f(n))| \leq \|f(n)\|\psi_n(1)$. Since $F$ belongs to $c_0(\N)$, we have 
    \begin{align*}
        \sum_{n \geq 1} |\psi_n(f(n))| \leq \sum_{n \geq 1}\|f(n)\|\psi_n(1) = \sum_{n \geq 1} \|f(n)\|\phi_n(1) = \phi(F) < \infty 
    \end{align*}
    giving us absolute convergence. Define 
    \begin{align*}
        \psi: c_0(\N,A) \to \C: f \mapsto \psi_0(Re(f)) + i \psi_0(Im(f))\;. 
    \end{align*}
    Note that since $\psi_0(f) = \phi(f)$ for all $f \in c_0(\N,E_{sa})$ that $\psi$ is an extension of $\phi$. For any $f \in c_0(\N,A)_+$, since $\psi_n(f(n)) \geq 0$ for all $n \geq 1$, $\psi(f) \geq 0$. It remains to see that $\psi$ is contractive. For all $n \geq 1$, let $\chi_n \in c_0(\N)$ denote the characteristic function on $\{1,2,\ldots, n\}$. The sequence $\chi_n \otimes 1 \in c_0(\N,A)$ forms an approximate unit for this C*-algebra. In particular, 
    \begin{align*}
        \|\psi\|_{cb} = \sup_{n \geq 1} \|\psi(\chi_n \otimes 1)\| = \sup_n \|\phi(\chi_n \otimes 1)\| \leq 1\;,  
    \end{align*}
    proving the claim. 

    If $E \otimes c_0(\N)$ is a C*-algebra then it is a minimal C$^*$-cover in the sense of \cite[Definition 6.5 (4)]{KKM2023}. In particular, since the inclusion $E \otimes c_0(\N) \hookrightarrow A \otimes c_0(\N)$ is an embedding, there is a *-homomorphism $\pi: A \otimes c_0(\N) \to E \otimes c_0(\N)$ such that the diagram 
    \begin{center} 
        \begin{tikzcd} 
            & A \otimes c_0(\N) \arrow[dr, "\pi"] & \\
           E \otimes c_0(\N) \arrow["\subseteq",ur] \arrow["\id",rr] & & E \otimes c_0(\N)
        \end{tikzcd}
    \end{center}
    commutes. For clarity, we denote by $*$ the multiplication on $E \otimes c_0(\N)$ and by $\cdot$ the multiplication on $A \otimes c_0(\N)$. Note that for all $n,m \geq 1$,
    \begin{align*} 
        (1 \otimes e_m) * (1 \otimes e_n) = \pi(1 \otimes e_m \cdot 1 \otimes e_n) = \pi(1 \otimes e_m)\delta_{n,m} = (1 \otimes e_m)\delta_{n,m}
    \end{align*} 
    That is, the $(1 \otimes e_m)$ form orthogonal projections in $E \otimes c_0(\N)$. Similarly, for all $x \in E$ and $n \geq 1$, 
    \begin{align*} 
        (x \otimes e_n) * (1 \otimes e_n) = \pi((x \otimes e_n) \cdot (1 \otimes e_n)) = \pi(x \otimes e_n) = x \otimes e_n \;. 
    \end{align*}
    For all $n \geq 1$, consider the cpcc map $\delta_n: c_0(\N,E) \to E : f \mapsto f(n)$. Since $1 \otimes e_1$ is in the multiplicative domain of $\delta_n$, 
    \begin{align*} 
        \delta_n((x \otimes e_1) * (y \otimes e_1)) &= \delta_n((x \otimes e_1) * (y \otimes e_1) * (1 \otimes e_1)) \\ 
        &= \delta_n((x \otimes e_1)* (y \otimes e_1)) \delta_{n,1} \;. 
    \end{align*}
    It follows that $(x \otimes e_1) * (y \otimes e_1) = \delta_1((x \otimes e_1)*(y \otimes e_1)) \otimes e_1$. 

    Define a multiplication $\circ$ on $E$ by $x \circ y = \delta_1((x \otimes e_1)*(y \otimes e_1))$. The above calculations tell us that for all $n \geq 1$ and $X,Y \in M_n(E)$, we have 
    \begin{align*} 
        (X \otimes e_1) * (Y \otimes e_1) = (X \circ Y) \otimes e_1 \;,
    \end{align*} 
    where $\circ$ on $M_n(E)$ is defined by the identity $(A \otimes x) \circ (B \otimes y) := (A \cdot B) \otimes x \circ y$ for all $x,y \in E$ and $A,B \in M_n$. It follows that $(E,\circ)$ is a *-algebra. Since for all $x \in E$,
    \begin{align*} 
        \|x\|^2 = \|x \otimes e_1\|^2 = \|(x^* \otimes e_1)* (x \otimes e_1)\| = \|(x^* \circ x) \otimes e_1\| = \|x^* \circ x\|\;,
    \end{align*}
    we have the C*-identity. Similarly, for all $x,y \in E$, $\|x \circ y\| \leq \|x\|\|y\|$. Finally, it remains to show that the identity map $ E \to (E,\circ)$ is a unital complete order isomorphism. For all $n \geq 1$ and $X \in M_n(E)$, we know that $X \geq 0$ if and only if $X \otimes e_1 \geq 0$. Since $E \otimes c_0(\N)$ is a C*-algebra, this is equivalent to the existence of $f \in M_n(E) \otimes c_0(\N)$ such that $f^* * f = X \otimes e_1$. Notice in particular that 
    \begin{align*} 
        (1 \otimes e_1) * f^* * f * (1 \otimes e_1) = X \otimes e_1 
    \end{align*} 
    and that $f * (1 \otimes e_1) = F \otimes e_1$ for some $F \in M_n(E)$. Our identity reduces to
    \begin{align*} 
        X \otimes e_1 = (F^* \otimes e_1) * (F \otimes e_1) = (F^* \circ F) \otimes e_1 \;. 
    \end{align*} 
    Therefore, $X \geq 0$ in $M_n(E)$ if and only if $X = F^* \circ F$ for some $F \in M_n(E)$, completing the proof. 
\end{proof}

\begin{corollary}
    Let $E$ be a unital operator system with the CPAP such that $E$ is not a C*-algebra, such as the operator system presented in \cite[Theorem 4.2]{HanPaulsen}. The operator system $E \otimes c_0(\N)$ has the CPAP but is not a C*-algebra. 
\end{corollary}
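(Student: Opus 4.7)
The plan is to simply invoke Proposition~\ref{Proposition: Han-Paulsen} twice. Starting from a unital operator system $E$ with the CPAP that is not a C*-algebra (an example of which is the Han--Paulsen operator system of \cite[Theorem 4.2]{HanPaulsen}), the first half of Proposition~\ref{Proposition: Han-Paulsen} immediately yields that $E \otimes c_0(\N)$ has the CPAP. For the second half, I would argue by contraposition: if $E \otimes c_0(\N)$ were a C*-algebra, then the second assertion of Proposition~\ref{Proposition: Han-Paulsen} would force $E$ to be a C*-algebra, contradicting the hypothesis on $E$. Hence $E \otimes c_0(\N)$ has the CPAP but is not a C*-algebra.

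Since both halves are already established, there is no genuine obstacle here; the corollary is essentially a restatement of Proposition~\ref{Proposition: Han-Paulsen} once one has any unital operator system with the CPAP that fails to be a C*-algebra. The only thing worth noting is the existence of such an $E$, which is guaranteed by the cited example from \cite[Theorem 4.2]{HanPaulsen}, so the proof would be a one-line appeal.
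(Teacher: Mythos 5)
Your proposal is correct and is exactly the paper's proof: the corollary follows by applying Proposition~\ref{Proposition: Han-Paulsen} directly for the CPAP and in contrapositive form for the failure to be a C*-algebra. No further comment is needed.
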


\begin{proof}
    This is an application of Proposition~\ref{Proposition: Han-Paulsen} to $E$. 
\end{proof}

\printbibliography

\end{document}